\newtheorem{thm}{Theorem}[section] 
\newtheorem{cor}[thm]{Corollary} 
\newtheorem{prop}[thm]{Proposition}
\theoremstyle{definition} 
\theoremstyle{remark}
\def\S{\Sigma} 
\def\n{\nabla}
\def\p{\partial}
\def\n{\nabla}
\def\C{\mathcal{C}}
\def\p{\partial}
\def\s{\sigma}
\def\n{\nabla}
\def\<{\langle}
\def\>{\rangle}
\def\n{\nabla}
\def\NN{\mathbb{N}}
\def\RR{\mathbb{R}}
\def\SS{\mathbb{S}}
\def\L{\mathcal{L}}
\def\p{\partial}
\def\s{\sigma}
\def\V{\mathcal V}
\def\T{\mathcal T}
\def\bt{\mathbf{b}_\theta}
\def\R{\mathbb{R}}
\DeclareMathOperator{\dive}{div}
\patchcmd{\abstract}{\scshape\abstractname}{\textbf{\abstractname}}{}{}
\def\@makefnmark{} 
\numberwithin{equation}{section}
\begin{document}
\title[A Minkowski-type inequality]{ A Minkowski-type inequality for\\ capillary hypersurfaces in a half-space}
 \author[G. Wang]{Guofang Wang}
 
\address[G. Wang]{Mathematisches Institut, Albert-Ludwigs-Universit\"{a}t Freiburg, Freiburg im Breisgau, 79104, Germany}
\email{guofang.wang@math.uni-freiburg.de}

 \author[L. Weng]{Liangjun Weng}
 
\address[L. Weng]{School of Mathematical Sciences, Anhui University, Hefei, 230601, P. R. China}

\address{ Dipartimento Di Matematica, Universit\`a degli Studi di Roma "Tor Vergata", Via della Ricerca Scientifica 1, 00133,  Roma, Italy}
\email{ljweng08@mail.ustc.edu.cn}

 \author[C. Xia]{Chao Xia}
\address[C. Xia]{School of Mathematical Sciences, Xiamen University, Xiamen, 361005, P. R. China}
\email{chaoxia@xmu.edu.cn}

\subjclass[2020]{Primary: 53E10  Secondary: 53C21, 35K93, 53C24}
 \keywords{Capillary hypersurfaces, Minkowski-type inequality, inverse mean curvature flow.}
 
\maketitle
\begin{abstract}
In this article, we investigate a flow of inverse mean curvature type for capillary hypersurfaces in the half-space. We establish the global existence of solutions for this flow and demonstrate that it converges smoothly to a spherical cap as time tends to infinity. As a result, we derive a new Minkowski-type inequality for star-shaped and mean convex capillary hypersurfaces for the whole range of contact angle $\theta\in(0,\pi)$.
\end{abstract}


\section{Introduction}
Let $\RR^{n+1}_+=\{x\in \RR^{n+1} \, |\, x_{n+1} > 0\}$ $(n\ge 2)$ be the  Euclidean upper half-space and $\Sigma$  an embedded compact hypersurface in  $\overline{\RR^{n+1}_+}$ with boundary $\p\Sigma$ lying in $\partial \RR^{n+1}_+$. Such a hypersurface is called a
\textit{capillary hypersurface}, if 
$\S$ intersects with $\p {{\R}}_+^{n+1}$ at a constant contact angle $\theta\in (0,\pi)$ along $\p\S$.  In this paper, we consider a family of embedded capillary hypersurface $\S_t:=x(M,t)\subset \overline{\RR^{n+1}_+}$, where $M$ is an $n$-dimensional compact orientable smooth manifold with boundary $\p M$, satisfies the following inverse mean curvature type flow
\begin{eqnarray}\label{flow00}
\left\{
\begin{array}{llll}
(\partial_t x)^\perp  &=& \left( \frac {n\left(1+\cos \theta \<\nu, e\> \right)}{H} -\<x, \nu\>\right) \nu, \quad &
\hbox{ in } M\times[0,T),\\
\<\nu ,e \> &=& -\cos\theta 
\quad  & \hbox{ on }\partial M \times [0,T),
\end{array} \right.
\end{eqnarray}where $\nu$ is the unit outward normal of 
$\S_t$ and $e:=(0,\cdots, 0, -1)$.

 The introduction of flow \eqref{flow00} is motivated by the idea of Guan-Li \cite{GL09, GL15} (see also \cite{BGL, CGLS, GLW}) for establishing the isoperimetric-type inequality for quermassintegral and by the Minkowski formulas for capillary hypersurfaces in \cite{WWX2022}. For more description of Guan-Li's idea for free boundary or capillary hypersurfaces, we refer to \cite{MWW, Sch2,  SWX, WWX2022, WeX21} and references therein.

For a capillary hypersurface $\S\subset \overline{\RR^{n+1}_+}$, we denote $\widehat \Sigma$  the bounded domain in $\overline{\RR^{n+1}_+}$ enclosed by $\S\subset \overline{\RR^{n+1}_+}$ and $\p \RR^{n+1}_+$. The boundary of $\widehat{\S}$  consists of two parts: one is $\Sigma$ and the other, which
will be denoted by $\widehat{\p \Sigma}$,  lies on   $\p \RR^{n+1}_+$.
Both have a common boundary, namely $\p \Sigma$.  The capillary area functional (cf. \cite{Finn}) is defined by  \begin{eqnarray*} 	
\V_{1,\theta}(\widehat{\Sigma}):=  |\S|-\cos\theta |\widehat{\p\S}|.\end{eqnarray*} 
For a hypersurface with boundary (not necessarily capillary) in $\overline{\RR^{n+1}_+}$, the relative (or capillary) isoperimetric inequality says that\begin{eqnarray*} 	
\frac{\V_{1,\theta}(\widehat{\Sigma})}{|\widehat{\S}|^{\frac{n}{n+1}}}\ge \frac{\V_{1,\theta}(\widehat{\C_{\theta, 1}})}{|\widehat{\C_{\theta, 1}}|^{\frac{n}{n+1}}}=(n+1) \bt^{\frac{1}{n+1}},\end{eqnarray*}
with equality holding if and only if $\widehat{\Sigma}$ is a capillary spherical cap.
Here we use $\C_{\theta, r}$ to denote a capillary spherical cap lying entirely in $\overline{\RR^{n+1}_+}$ by
\begin{eqnarray}\label{sph-cap}
\C_{\theta, r}:=\Big\{x\in \overline{\RR^{n+1}_+}\big||x-r\cos\theta e|=r \Big\},~r~\in (0, \infty),
\end{eqnarray} 
which is a portion of the sphere of radius $r$ and centered at $r\cos\theta e$. For notation simplicity, we denote  $$(n+1) \bt:=\V_{1,\theta}(\widehat{\C_{\theta,1}}).$$ One can check easily that $\bt$ is the volume of $\widehat{\C_{\theta, 1}}.$

In \cite{WWX2022}, we introduced a family of quermassintegrals for capillary hypersurfaces in the half-space, which can be regarded as a natural generalization of quermassintegrals for closed hypersurfaces. Among them the second quermassintegral is  given by \begin{eqnarray}\label{quermassintegrals}	\V_{2,\theta}(\widehat{\S}):=	\frac{1}{n}\left( \int_\S H dA -  \cos\theta \sin\theta    |\p\S| \right)	.\end{eqnarray} 
The main objective of this paper is to establish an optimal isoperimetric type inequality between $\V_{2,\theta}(\widehat{\S})$ and the capillary area functional $\V_{1,\theta}(\widehat{\S})$ under star-shaped and mean convex assumption, which is a capillary counterpart of the classical Minkowski-type inequality for closed hypersurfaces. For the latter, we refer to  Schneider's seminal book on convex bodies \cite{Schneider} and Guan-Li \cite{GL09}.

We return to flow \eqref{flow00}. A key feature of flow \eqref{flow00} is that it preserves the capillary area functional $\V_{1,\theta}(\widehat{\S_t})$ while it monotonically decreases $\V_{2,\theta}(\widehat{ \S_t})$ (cf. \cite[Proposition 3.1]{WWX2022}). It is easy to check that a family of stationary solutions of flow \eqref{flow00} is given by capillary spherical caps $\C_{\theta, r}$.
Our main result in this article is the following long-time existence and convergence for flow \eqref{flow00} under the assumption of star-shapedness and mean convexity. This can be viewed as a capillary counterpart of the results of Gerhardt \cite{Ger} and Urbas \cite{Urb90}  for closed hypersurfaces.
\begin{thm}\label{k-convex conv} 
	Assume the initial hypersurface  $\S_0$ is a star-shaped and strictly mean convex capillary hypersurface in $\overline{\RR^{n+1}_+}$ $(n\ge 2)$ with a contact angle   $\theta \in (0, {{\pi}} )$. 	Then  flow \eqref{flow00} starting from $\S_0$ exists for all time with uniform $C^{\infty}$-estimates. Moreover, it converges smoothly to a uniquely determined capillary spherical cap $\C_{\theta, r}$, as $t\to+\infty$.	
\end{thm}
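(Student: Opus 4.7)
\medskip
\noindent\textbf{Strategy.}
The plan follows the Gerhardt--Urbas paradigm for inverse curvature flows, with essential modifications arising from the capillary boundary (cf.\ \cite{Ger,Urb90,MWW,WeX21}). The main steps are: (i) parametrize the evolving star-shaped hypersurfaces $\S_t$ as radial graphs $x = \rho(\xi,t)\,\xi$ over the upper half-sphere $\SS_+^n$, reducing \eqref{flow00} to a scalar fully nonlinear parabolic equation for $\rho$ on $\SS_+^n\times[0,T)$ with an oblique Neumann-type boundary condition on $\p\SS_+^n$ encoding the capillary angle $\theta$; (ii) derive uniform a priori $C^{k,\a}$ estimates for $\rho$; (iii) obtain smooth long-time existence from the standard theory of oblique parabolic boundary value problems; and (iv) identify the limit via the monotonicity of the capillary quermassintegrals of \cite{WWX2022}.

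\medskip
\noindent\textbf{$C^0$ and $C^1$ estimates.}
Since every cap $\C_{\theta,r}$ is a stationary solution of \eqref{flow00} and the radii $r>0$ can be chosen arbitrarily small or large, the parabolic comparison principle sandwiches $\S_t$ between $\C_{\theta,r_-}$ and $\C_{\theta,r_+}$ for $r_\pm$ depending only on $\S_0$; this gives the uniform $C^0$ bound on $\rho$. For the gradient bound I would run the maximum principle on the capillary support function $u:=\<x,\nu\>+\cos\theta\,x_{n+1}$, which on $\C_{\theta,r}$ equals the constant $r\sin^2\theta$ and whose positivity characterizes capillary star-shapedness (cf.\ \cite{WWX2022}). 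The key point is that along $\p\S_t$ the oblique derivative of $u$ has a definite sign, thanks to the flatness of $\p\RR^{n+1}_+$ together with the capillary condition $\<\nu,e\>=-\cos\theta$ and $\theta\in(0,\pi)$, so the interior maximum-principle argument is not spoiled by a boundary contribution.

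\medskip
\noindent\textbf{$C^2$ estimates (main obstacle).}
Uniform parabolicity of \eqref{flow00} requires $H$ to remain bounded above and uniformly away from zero. The upper bound on $H$ is obtained by applying the maximum principle to a test function of the form $\log H - Nu$ with $N$ large, using the $C^1$ control of $u$ to absorb bad reaction terms; the boundary term is tamed by the capillary commutation identity for $\n H$ along the contact direction, made possible by the vanishing of the second fundamental form of $\p\RR^{n+1}_+$. The lower bound on $H$ is the most delicate step and the principal technical hurdle: one applies the maximum principle to (the reciprocal of) the full flow speed $F:=n(1+\cos\theta\<\nu,e\>)/H-\<x,\nu\>$, verifying by a careful boundary computation that the oblique term has a good sign under the capillary condition. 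Once two-sided bounds on $H$ are in hand, Krylov--Safonov estimates for concave fully nonlinear equations with oblique data, followed by Schauder iteration and parabolic bootstrapping, yield uniform $C^{k,\a}$ bounds for all $k$, and global existence follows by parabolic continuation.

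\medskip
\noindent\textbf{Convergence.}
By \cite[Proposition 3.1]{WWX2022}, the flow \eqref{flow00} is designed so that $\V_{1,\theta}(\widehat{\S_t})$ is constant in $t$ and $\frac{d}{dt}\V_{2,\theta}(\widehat{\S_t})\le 0$, with the integrand in the latter derivative given by a nonpositive Newton--Maclaurin-type expression that vanishes pointwise iff $\S_t$ is totally umbilical---and hence, given the capillary condition, precisely a cap $\C_{\theta,r}$. Because $\V_{2,\theta}$ is monotone and bounded below, its time derivative tends to zero, and the uniform $C^\infty$ bounds let one extract a smooth subsequential limit, which must therefore be a capillary spherical cap. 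The preserved value of $\V_{1,\theta}$ uniquely identifies the radius $r_\infty$, so the limit is independent of the subsequence, and a standard linearization at the cap $\C_{\theta,r_\infty}$ upgrades subsequential to full smooth convergence as $t\to+\infty$.
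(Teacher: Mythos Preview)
Your overall architecture (radial-graph reduction, barrier $C^0$, maximum-principle $C^1$, two-sided $H$ bounds, higher regularity, convergence via monotonicity of $\V_{2,\theta}$) matches the paper, but the specific test functions and the route to curvature estimates are different, and at two places your choices are not clearly workable.

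\emph{Star-shapedness and $H$ bounds.} The paper does not use $u=\<x,\nu\>+\cos\theta\,x_{n+1}$; it introduces the quotient $\bar u:=\dfrac{\<x,\nu\>}{1+\cos\theta\<\nu,e\>}$, which is tailor-made so that $\nabla_\mu\bar u=0$ on $\partial M$. This exact Neumann condition is what makes the boundary maximum principle clean; for your additive $u$, $\nabla_\mu u$ picks up a term $\cot\theta\,h(\mu,\mu)\<x,\nu\>$ plus a nonzero constant, and neither vanishing nor a definite sign is apparent, so the ``boundary contribution is not spoiled'' claim is unjustified. For the $H$ bounds the paper is also simpler than you suggest: $H$ itself satisfies $\L H\le 0$ (mod $\nabla H$) with $\nabla_\mu H=0$, giving the upper bound immediately---no auxiliary $\log H-Nu$ is needed. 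The lower bound comes from the product $P:=H\bar u$, for which the reaction terms in $\L\bar u$ and $\L H$ cancel exactly, leaving $\L P$ equal to pure gradient terms with $\nabla_\mu P=0$. Your alternative via the speed $F$ is plausible in spirit (and indeed the paper later bounds $\dot\varphi$ this way), but ``reciprocal of $F$'' is problematic since $F$ vanishes on the stationary caps.

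\emph{$C^2$ estimates.} This is the main divergence from the paper. You propose Krylov--Safonov for concave fully nonlinear equations with oblique data; the paper deliberately \emph{avoids} any direct second-derivative maximum-principle or Krylov--Safonov step. Instead it follows the Marquardt/Xia strategy: use the specific divergence form of $H$ in the radial parametrization to obtain spatial H\"older estimates for $\n^0\varphi$ from quasilinear elliptic theory, then derive a divergence-form parabolic equation for $\dot\varphi$ (with Neumann data, again thanks to $\nabla_\mu\bar u=\nabla_\mu H=0$) and run De~Giorgi--Nash--Moser/energy estimates to get H\"older control of $\dot\varphi$; these combine to give H\"older control of $H$, after which the scalar equation for $\varphi$ becomes \emph{linear} uniformly parabolic with H\"older coefficients and a linear(ized) oblique boundary condition, and standard Schauder theory closes the bootstrap. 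This is precisely what allows the argument to cover the full range $\theta\in(0,\pi)$, whereas previous direct $C^2$ approaches in \cite{WWX2022,Wei_el} were confined to $\theta\le\pi/2$. Your Krylov--Safonov route would require checking concavity in $D^2\varphi$ and invoking a nonlinear-oblique boundary version; neither is addressed, and the paper's point is exactly that one can sidestep this.

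\emph{Convergence.} Here your sketch agrees with the paper (and with \cite[Proposition~4.12]{WWX2022}).
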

 As an application, we obtain the following Minkowski-type inequality for star-shaped and mean convex capillary hypersurfaces in $\overline{\RR^{n+1}_+}$ for the whole angle range $ {\theta \in (0, \pi)}$.
\begin{thm}\label{thm 2}
	Let $\Sigma\subset \overline{\RR^{n+1}_+}$ $(n\ge 2)$ be a  star-shaped and mean convex capillary hypersurface with a contact angle $ {\theta \in (0, \pi)}$, then 
	\begin{eqnarray}\label{mink ineq}
	\int_\S HdA \geq n(n+1)^{\frac {1}{n}} \bt^{\frac{1}{n}} (|\S|-\cos\theta |\widehat{\p\S}|)^{\frac{n-1}{n}}+\sin\theta\cos\theta |\p\S|.
	\end{eqnarray} 
 Equality holds if and only if $\Sigma$  is a capillary spherical cap. 
 
\end{thm}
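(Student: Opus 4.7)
The plan is to deploy flow \eqref{flow00} as a one-parameter family connecting the given hypersurface to its target capillary spherical cap and then read off the inequality from the conserved and monotone quantities along the flow. Concretely I would take the given star-shaped, strictly mean convex capillary hypersurface $\Sigma$ as the initial datum $\Sigma_0$. By Theorem \ref{k-convex conv} the flow is well-defined on $[0,\infty)$ and $\Sigma_t\to \C_{\theta,r_\infty}$ smoothly as $t\to+\infty$ for some $r_\infty>0$.

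Next I would invoke the two monotonicity properties recalled right after \eqref{quermassintegrals} and proved in \cite[Proposition 3.1]{WWX2022}: along \eqref{flow00} the capillary area $\V_{1,\theta}(\widehat{\Sigma_t})$ is preserved while $\V_{2,\theta}(\widehat{\Sigma_t})$ is nonincreasing. Passing to the limit and using $H\equiv n/r$ on $\C_{\theta,r}$ together with the scalings $|\C_{\theta,r}|=r^n|\C_{\theta,1}|$, $|\widehat{\p\C_{\theta,r}}|=r^n|\widehat{\p\C_{\theta,1}}|$, $|\p\C_{\theta,r}|=r^{n-1}|\p\C_{\theta,1}|$, a short direct computation on the cap gives
\[
\V_{1,\theta}(\widehat{\Sigma})=\V_{1,\theta}(\widehat{\C_{\theta,r_\infty}})=(n+1)\bt\, r_\infty^{n},\qquad
\V_{2,\theta}(\widehat{\Sigma})\geq \V_{2,\theta}(\widehat{\C_{\theta,r_\infty}})=(n+1)\bt\, r_\infty^{n-1}.
\]
Eliminating $r_\infty$ between these two relations and multiplying through by $n$, the definitions of $\V_{1,\theta}$ and $\V_{2,\theta}$ in \eqref{quermassintegrals} immediately unpack the resulting inequality into \eqref{mink ineq}.

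For the rigidity statement, suppose equality holds in \eqref{mink ineq} for $\Sigma$. Then $\V_{2,\theta}(\widehat{\Sigma_t})$ cannot strictly decrease, so it is constant in $t$, and the nonnegative integrand driving $\tfrac{d}{dt}\V_{2,\theta}(\widehat{\Sigma_t})\leq 0$ in \cite[Proposition 3.1]{WWX2022} must vanish pointwise on each $\Sigma_t$. That integrand comes from a Minkowski/Heintze--Karcher-type inequality whose equality case forces $\Sigma_t$ to be totally umbilical, and a capillary umbilical hypersurface in $\overline{\RR^{n+1}_+}$ is necessarily a capillary spherical cap; hence $\Sigma=\Sigma_0=\C_{\theta,r_\infty}$. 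I expect this rigidity identification to be the main delicate point of the plan: one has to make sure that the defect driving the monotonicity of $\V_{2,\theta}$ is pointwise coercive and that the resulting umbilical classification genuinely respects the capillary boundary condition at $\p\Sigma\subset\p\RR^{n+1}_+$. All the existence and convergence input required for the flow itself is supplied directly by Theorem \ref{k-convex conv}.
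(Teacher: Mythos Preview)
Your strategy is exactly the one the paper uses: run the flow from $\Sigma$, invoke the monotonicity of $\V_{2,\theta}$ and the conservation of $\V_{1,\theta}$ from \cite[Proposition~3.1]{WWX2022}, and read off \eqref{mink ineq} from the limiting cap values; the equality case is likewise handled by analyzing when the monotonicity is saturated, which the paper defers to \cite{SWX,WWX2022}.

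There is, however, one genuine gap in your plan. Theorem~\ref{k-convex conv} requires $\Sigma_0$ to be \emph{strictly} mean convex, whereas Theorem~\ref{thm 2} is stated for merely mean convex $\Sigma$. You silently strengthened the hypothesis when you wrote ``the given star-shaped, strictly mean convex capillary hypersurface $\Sigma$''. To close this you need an approximation step: perturb a mean convex capillary $\Sigma$ to a nearby strictly mean convex capillary hypersurface, apply your argument, and pass to the limit in \eqref{mink ineq}. The paper does exactly this, citing the approximation scheme of \cite{GL09}. Note also that the equality characterization does not follow from the approximation alone and needs its own argument (as the paper indicates by pointing to \cite{SWX,WWX2022}); your rigidity sketch via constancy of $\V_{2,\theta}$ and umbilicity is on the right track for the strictly mean convex case but you should flag that the non-strict case requires separate treatment.
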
 
Note that \eqref{mink ineq} is equivalent to 
\begin{eqnarray}\label{mink ineq1}	\frac{\V_{2,\theta}(\widehat{\S})}{\V_{1,\theta}(\widehat{\S})^{\frac{n-1}{n}}} \geq \frac{\V_{2,\theta}(\widehat{\C_{\theta, 1}})}{\V_{1,\theta}(\widehat{\C_{\theta, 1}})^{\frac{n-1}{n}}} =n(n+1)^{\frac {1}{n}} \bt^{\frac{1}{n}}.	\end{eqnarray}
From \eqref{mink ineq1} one can see easily that the functional $\frac{\V_{2,\theta}(\widehat{\S})}{\V_{1,\theta} (\widehat{\S})^{\frac {n-1} n}} $ achieves its minimum at spherical caps. 

When  $\theta= \pi \slash 2$, Theorem \ref{thm 2} follows from the result of  Guan-Li \cite[Theorem 2]{GL09} for closed hypersurfaces by using a 
reflection argument. It is clear that if $\theta\not = \pi\slash 2$ the reflection argument does not work.

Under the stronger geometric condition that $\Sigma$ is convex, Theorem \ref{thm 2}   has been proved for contact angle $\theta\in (0, \pi\slash  2]$ in \cite{Wei_el}, where the authors proved actually more general geometric inequalities, the Alexandrov-Fenchel inequalities, which were initiated and conjectured in \cite{WWX2022}. We emphasize that the results in \cite{WWX2022} and \cite{Wei_el} crucially depend on the restriction $\theta \le \pi/2$, which was used to obtain $C^2$ estimates.
We observed in \cite{MWWX} that for convex capillary hypersurfaces, all the Alexandrov-Fenchel inequalities with the whole range of the contact angle $\theta \in (0, \pi)$ follow actually from the classical Alexandrov-Fenchel inequalities for non-smooth convex bodies and provided a proof in a smooth setting with a Robin-type boundary condition. The proof follows closely the original idea {in the theory of convex geometry} with necessary modifications on the boundary.

It remains to be asked if the convexity condition could be weakened for these Alexandrov-Fenchel inequalities for capillary hypersurfaces.  This paper is the first one in this direction.
We remark that without the star-shaped assumption, the Minkowski inequality for closed mean convex hypersurfaces remains an open problem, despite numerous efforts made to solve it, see \cite{AFM, Glau, Hui} for instance. 
 It is also natural to expect that \eqref{mink ineq} holds for general mean convex capillary hypersurfaces. Especially a capacity method used in \cite{AFM} would be expected to be useful by a necessary modification.  We also mention several works on the Minkowski-type inequality in warped product spaces  \cite{BHW, Wei, Sch2}, as well as anisotropic Minkowski-type inequality \cite{Xia}.

In order to prove Theorem \ref{k-convex conv}, it is crucial to show that flow \eqref{flow00} preserves the star-shapedness and strictly mean convex. For the star-shapedness, we prove a uniform lower bound for a new test function 
$$\bar u:=\frac{\<x,\nu\>}{1+\cos\theta \<\nu,e\>},$$ which is a variant of the support function $\<x,\nu\>$. In fact, it can be viewed as the anisotropic support function (see e.g. \cite{Xia}) with respect to ${\mathcal C}_{\theta, 1}$.
Another key point is the uniform lower and upper bounds for $H$. For this, instead of $H$ we consider  $$P=\bar u H.$$ To get the curvature estimate,  we employ a similar method in \cite{Marquardt2013} and \cite{Xia} with a few modifications to take care of the Robin boundary condition, which explore the specific divergence structure of mean curvature, to directly obtain Schauder's estimate for the radial function for star-shaped hypersurfaces.  This completes the proof of Theorem \ref{k-convex conv}. For the proof of Theorem \ref{thm 2}, it follows from the monotonicity property $\V_{2,\theta}$ along flow \eqref{flow00} and the convergence result in Theorem \ref{k-convex conv}.

The paper is organized as follows: In Section \ref{sec2}, some relevant evolution equations are collected and we prove the preservation of the star-shapedness and obtain uniform a priori estimates for flow \eqref{flow00}. Section \ref{sec4} is dedicated to demonstrating the long-time existence and global convergence of flow \eqref{flow00}, specifically, we prove Theorem \ref{k-convex conv}. As a result, we conclude the proof of Theorem \ref{thm 2}.

\

\section{Mean convexity and star-shapedness}\label{sec2}

\subsection{Evolution equations}

Let $\Sigma_t$ be a family of smooth, embedded hypersurfaces with capillary boundary in $ \overline{\RR^{n+1}_+}$, given by the embeddings $x(\cdot,t): M\to \overline{\RR^{n+1}_+}$, which evolve by the general flow.

\begin{eqnarray}\label{flow with normal and tangential}
\p_t x=f\nu+\T,
\end{eqnarray} with $\T\in T\Sigma_{t}$ and a general velocity function $f$. For a fixed $f$, we choose $
\T |_{\p \S_t}=f\cot\theta \mu$ with $\mu$ being the unit outward conormal of $\p\S_t$ in $\S_t$, which implies that the restriction of $x(\cdot, t)$ on $\p M$ is contained in $\RR^n$. We refer to \cite[Section 2]{WWX2022} for further discussions. Along flow \eqref{flow with normal and tangential}, we have the following evolution equations for the induced metric $g_{ij}$, the volume element $d\mu_t$,  the unit outward normal $\nu$, the second fundamental form $(h_{ij})$, the Weingarten curvature tensor $(h^i_j)$ and the mean curvature $H$ of the hypersurfaces $\Sigma_t:=x(M, t)\subset  \overline{\RR^{n+1}_+}$.   (cf. \cite[Proposition 2.11]{WeX21} for a proof)
\begin{prop}[\cite{WeX21}]\label{basic evolution eqs}
	Along  flow \eqref{flow with normal and tangential}, there holds  
	\begin{enumerate} 
		\item $\p_t g_{ij}=2fh_{ij}+\n_i \T_j+\n_j\T_i$.
  	\item $\p_t d\mu_t=(fH+\dive (\T)) d\mu_t$.
   
\item $\p_t\nu =-\n f+h(e_i,\T)e_i$.	
		\item $\p_t h_{ij}=-\n^2_{ij}f +fh_{ik}h_{j}^k +\n_\T h_{ij}+h_{j}^k\n_i\T_k+h_{i}^k\n_j \T_k.$
		\item $\p_t h^i_j=-\n^i\n_{j}f -fh_{j}^kh^{i}_k+\n_\T h^i_j.$
		\item $\p_t H=-\Delta f-|h|^2 f+ \langle\n  H, \T\rangle $.

	\end{enumerate}In above, $\n$ and $\Delta$ are the Levi-Civita connection and Beltrami-Laplacian operator on $\S_t$ with respect to the induced metric.
\end{prop}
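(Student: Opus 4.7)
The plan is to decompose flow \eqref{flow with normal and tangential} into the normal velocity $f\nu$ and the tangential vector field $\T$, and to compute each contribution to the evolution of the geometric quantities separately before summing. The normal piece reproduces the standard Huisken-type first variation formulas familiar from mean curvature flow theory, while the tangential piece, being the infinitesimal generator of a one-parameter family of diffeomorphisms of $M$, contributes exactly the Lie-derivative terms in $\T$. This decomposition is what makes items (1)--(6) fall out in a uniform way, and it bypasses having to rederive anything from scratch.

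For items (1)--(3) I would differentiate the defining expressions directly in local coordinates on $M$. From $g_{ij}=\langle\partial_i x,\partial_j x\rangle$, substituting $\partial_t x=f\nu+\T$ and using the Weingarten relation $\partial_i\nu=h_i^k\partial_k x$ produces the $2fh_{ij}$ term, while the tangential part yields $\n_i\T_j+\n_j\T_i$; this is (1). Tracing against $g^{ij}$ and using $\partial_t\log\sqrt{\det g}=\tfrac12 g^{ij}\partial_t g_{ij}$ together with $g^{ij}\n_i\T_j=\dive(\T)$ gives (2). For (3), since $|\nu|^2\equiv 1$ the vector $\partial_t\nu$ is tangential, so its components are determined by $\langle\partial_t\nu,\partial_i x\rangle=-\langle\nu,\partial_t\partial_i x\rangle$, which expands to give precisely $-\n f+h(e_i,\T)e_i$.

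The heart of the proposition is item (4). Starting from $h_{ij}=-\langle\partial_i\partial_j x-\G_{ij}^k\partial_k x,\nu\rangle$, I would differentiate in $t$ at a point where normal coordinates (so $\G_{ij}^k=0$ instantaneously) are adopted to simplify the bookkeeping. The normal contribution yields the Huisken-type expression $-\n_i\n_j f+f h_{ik}h_j^k$ after commuting $\partial_t$ with $\n_i,\n_j$ via (3) and the Gauss equation. The tangential contribution is the Lie derivative $\L_\T h_{ij}=\n_\T h_{ij}+h_j^k\n_i\T_k+h_i^k\n_j\T_k$ of the second fundamental form, which is a standard identity exploiting the Codazzi symmetry $\n_i h_{jk}=\n_j h_{ik}$. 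Summing produces (4). Then (5) follows from $h^i_j=g^{ik}h_{kj}$ combined with (1) and (4), and (6) follows from (5) by tracing, using $\tr(\n^i\n_j f)=\Delta f$ and $\tr(\n_\T h^i_j)=\langle\n H,\T\rangle$.

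The main obstacle I anticipate is the bookkeeping in (4): commuting $\partial_t$ with two covariant derivatives produces terms from the evolving Christoffel symbols that must cancel correctly against pieces of the normal and tangential variations. The normal-coordinate trick together with systematic use of Codazzi to symmetrize the Lie-derivative terms should make these cancellations transparent. Notably, the capillary boundary condition does not enter this proposition at all, since the stated evolution equations are purely interior identities; the boundary condition $\langle\nu,e\rangle=-\cos\theta$ and the corresponding choice $\T|_{\p\S_t}=f\cot\theta\,\mu$ will only enter later when integrating or applying boundary-type estimates.
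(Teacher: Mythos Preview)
Your proposal is correct and follows the standard decomposition into normal and tangential pieces, which is exactly how these identities are derived. Note, however, that the paper does not supply its own proof of this proposition: it simply cites \cite[Proposition 2.11]{WeX21}. So there is no ``paper's proof'' to compare against here; your outline is the expected argument and matches what one finds in the cited reference.
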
  

\subsection{Star-shapedness and Mean convexity}

We turn to the study of the flow.
In this subsection, we show that the star-shapedness and the mean convexity are preserved along flow \eqref{flow00}. In order to prove this, the key is a choice of a suitable test function. 

In the following  we study flow \eqref{flow00}, i.e, namely we consider 
$$f=\frac {n\left(1+\cos \theta \<\nu, e\> \right)}{H} -\<x, \nu\>.$$
For convenience, we introduce the parabolic operator with respect to  flow \eqref{flow00} as
\begin{eqnarray*}
	\L :=\p_t-\frac{n(1+\cos\theta\langle \nu,e\rangle)}{H^2}\Delta-\left\langle \T+x-\frac{n\cos\theta}{H}e,\n \right\rangle. 
\end{eqnarray*}

Let $\Sigma_0$ be an initial capillary hypersurface which is star-shaped and mean convex.
The star-shapedness of $\S_0$ implies that there exist  $0<r_1<r_2<\infty$ such that 
$$\S_0\subset \widehat{\C_{\theta,r_2}}\setminus \widehat{\C_{\theta,r_1}},$$ 	where $\widehat{\C_{\theta,r}}$ denotes the  bounded domain in $ \overline{\RR^{n+1}_+}$ enclosed by $\C_{\theta,r}$ and $\p\R^{n+1}_+$.
Following the argument in \cite[Proposition 4.2]{WWX2022}, we have the following height estimate, which follows from the maximum principle (or the avoidable principle) since $\C_{\theta, r}$ is a stationary solution of flow \eqref{flow00}.
\begin{prop}\label{c0 est}
	For any $t\in [0, T)$, along  flow \eqref{flow00},  there holds
	\begin{eqnarray}\label{c0 bound}
		\Sigma_{t}\subset \widehat{\C_{\theta,r_2}}\setminus\ \widehat{\C_{\theta,r_1}},
	\end{eqnarray}
	where $\C_{\theta,r}$ defined by \eqref{sph-cap} and $r_{1}, r_{2}$ only depend on $\Sigma_{0}$.
\end{prop}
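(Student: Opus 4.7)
My plan is to prove \eqref{c0 bound} as an avoidance (comparison) principle for flow \eqref{flow00}, using the spherical caps $\mathcal{C}_{\theta,r}$ as barriers. The first step is to verify that every $\mathcal{C}_{\theta,r}$ is a stationary solution. On $\mathcal{C}_{\theta,r}$ the outward normal is $\nu=(x-r\cos\theta\, e)/r$, so $H=n/r$ and
\[
\langle x,\nu\rangle = r + r\cos\theta\,\langle\nu,e\rangle = r\bigl(1+\cos\theta\,\langle\nu,e\rangle\bigr),
\]
whence the normal speed $f = n(1+\cos\theta\,\langle\nu,e\rangle)/H - \langle x,\nu\rangle$ vanishes identically. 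The capillary boundary condition $\langle\nu,e\rangle=-\cos\theta$ on $\partial\mathcal{C}_{\theta,r}$ follows from $\langle x,e\rangle=0$ at those points.

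Next, I would use the star-shapedness of $\Sigma_0$ to represent $\Sigma_t$ (at least for short time, and subsequently combined with the preservation of star-shapedness) as a radial graph $x(y,t)=\rho(y,t)\,y$ over the upper hemisphere $\mathbb{S}^n_+$; the cap $\mathcal{C}_{\theta,r}$ corresponds to the explicit radial function $\rho_r(y)=r\,\phi_\theta(y)$, where $\phi_\theta(y)=\cos\theta\,\langle y,e\rangle+\sqrt{\cos^2\theta\,\langle y,e\rangle^2+\sin^2\theta}$. Flow \eqref{flow00} then becomes a scalar, fully nonlinear, uniformly parabolic equation for $\rho$ on $\mathbb{S}^n_+$, and the capillary angle condition becomes an oblique (Robin-type) boundary condition on $\partial\mathbb{S}^n_+$. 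For each $r$, $\rho=r\phi_\theta$ is a stationary solution, so the quotient $u:=\rho/\phi_\theta$ satisfies an equation for which constants are stationary; the initial inclusion translates into $r_1\le u(\cdot,0)\le r_2$.

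Finally, $u-r_2$ and $r_1-u$ satisfy linear parabolic inequalities with the corresponding oblique condition on $\partial\mathbb{S}^n_+$. Applying the strong parabolic maximum principle in the interior together with the Hopf boundary-point lemma for oblique derivative problems rules out the formation of new extrema beyond the initial ones, yielding $r_1\le u(y,t)\le r_2$ on $\mathbb{S}^n_+\times[0,T)$; this is precisely \eqref{c0 bound}.

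The main obstacle is the boundary analysis: the capillary condition induces an oblique (rather than Dirichlet) condition for the graph function, so the standard maximum principle must be supplemented by the Hopf lemma in its oblique form. A purely geometric alternative, in the spirit of \cite{WWX2022}, is to argue directly at a putative first contact point $p_0$: if $p_0$ is interior, tangency forces $\nu_\Sigma=\nu_{\mathcal{C}}$, $H_\Sigma\ge n/r_2$, and a common value of $\langle x,\nu\rangle$, whence $f_\Sigma\le 0=f_{\mathcal{C}}$, which upgrades to equality everywhere via the strong maximum principle and contradicts the assumption on $\Sigma_0$; if $p_0\in\partial\mathbb{R}^{n+1}_+$, the shared contact-angle condition forbids first contact there. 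Either route reduces the result to a standard strong-maximum-principle argument.
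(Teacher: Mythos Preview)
Your proposal is correct and follows essentially the same approach as the paper: the paper simply states that the estimate ``follows from the maximum principle (or the avoidable principle) since $\mathcal{C}_{\theta,r}$ is a stationary solution of flow \eqref{flow00}'' and refers to \cite[Proposition~4.2]{WWX2022} for the argument. Your write-up in fact supplies more detail than the paper does---the verification that $f\equiv 0$ on $\mathcal{C}_{\theta,r}$, the scalar reduction, and the handling of the oblique boundary condition via the Hopf lemma (or the geometric first-contact argument) are exactly the ingredients behind the cited avoidance principle.
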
 \label{prop_3.2}
Next, we show that the star-shapedness is preserved along flow \eqref{flow00}.
\begin{prop}	\label{c1-est to flow}
	Let $\S_0$ be a  star-shaped and strictly mean convex hypersurface with capillary boundary in $\overline{\RR^{n+1}_+}$ and $\theta \in(0,\pi)$, then there exists $c_0>0$ depending only on $\S_0$, such that the solution $\S_t$ of flow \eqref{flow00} satisfies
	\begin{eqnarray}\label{c1 est flow}
		\langle x,\nu\rangle(p,t) \geq c_0,
	\end{eqnarray}for all $(p,t)\in M\times [0,T)$. 
\end{prop}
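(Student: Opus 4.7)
The plan is to use the \emph{anisotropic support function} $\bar u=\frac{\<x,\nu\>}{1+\cos\theta\<\nu,e\>}$ suggested in the introduction and show that $\min_{\S_t}\bar u$ does not decrease along the flow. As a sanity check, one verifies directly that on any capillary spherical cap $\C_{\theta,r}$ one has $\bar u\equiv r$, so $\bar u$ plays the role of a radial parameter; and since $\theta\in(0,\pi)$ implies $|\cos\theta|<1$, the denominator $v:=1+\cos\theta\<\nu,e\>$ lies between two positive constants, so a uniform bound $\bar u\geq c>0$ is equivalent to $\<x,\nu\>\geq c_0>0$.

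The key property motivating this specific choice of test function is a \emph{homogeneous Neumann} boundary condition. Differentiating $\<\nu,e\>\equiv-\cos\theta$ tangentially along $\p\S$ shows that $\mu$ is a principal direction, i.e.\ $h(\mu,\tau)=0$ for every $\tau\in T\p\S$. Combined with the decomposition $e=\sin\theta\,\mu-\cos\theta\,\nu$ on $\p\S$ (giving $e^T=\sin\theta\,\mu$) and the identity $\<x,\mu\>=\cot\theta\,\<x,\nu\>$ on $\p\S$ (which follows from $\<x,e\>=0$ on $\p\R^{n+1}_+$), the standard formulas $\n_i\<x,\nu\>=h_{ij}\<x,e_j\>$ and $\n_i\<\nu,e\>=h_{ij}\<e,e_j\>$ yield $\n_\mu u=h(\mu,\mu)\<x,\mu\>$ and $\n_\mu v=\cos\theta\sin\theta\,h(\mu,\mu)$; the quotient rule then produces $\n_\mu\bar u=0$ on $\p\S$. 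This Neumann condition lets us rule out a boundary minimum via the Hopf boundary point lemma.

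For the interior analysis I compute $\L\bar u$ via the quotient rule, using Proposition \ref{basic evolution eqs} together with the well-known identities
\begin{align*}
\De\<x,\nu\>&=\<x^T,\n H\>+H-|h|^2\<x,\nu\>,\\
\De\<\nu,e\>&=\<e^T,\n H\>-|h|^2\<\nu,e\>.
\end{align*}
At an interior minimum point $p_0$ of $\bar u(\cdot,t)$ one has $\n u=\bar u\,\n v$, which collapses the mixed gradient terms; moreover, the ``correction'' term $-\frac{n\cos\theta}{H}e$ built into $\L$ is designed precisely to absorb the $\n\<\nu,e\>$-contribution coming from the denominator $v$. After these cancellations the remaining curvature terms should combine into $\L\bar u\big|_{p_0}\geq 0$, so the parabolic maximum principle, together with the Neumann condition of the previous paragraph, yields $\min_{\S_t}\bar u\geq\min_{\S_0}\bar u>0$ for all $t$, whence $\<x,\nu\>\geq c_0>0$ as required.

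The main technical obstacle will be the algebraic bookkeeping in the computation of $\L\bar u$: many cross-terms from $\<x^T,\n H\>$, $\<e^T,\n H\>$, $h(\T,x^T)$, and the advection term $\<\T+x-\frac{n\cos\theta}{H}e,\n\cdot\>$ in $\L$ must cancel at a critical point of $\bar u$. Verifying that the specific flow speed $\frac{n(1+\cos\theta\<\nu,e\>)}{H}-\<x,\nu\>$ and the operator $\L$ are compatible in exactly the right way -- producing a nonnegative residue without appealing to any extra convexity of $\S_t$ -- is the heart of the argument, and explains \emph{a posteriori} why the anisotropic $\bar u$ (rather than the naive support function $\<x,\nu\>$) is the correct quantity to monitor when $\theta\neq\pi/2$.
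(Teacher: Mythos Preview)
Your strategy is exactly the paper's: same test function $\bar u=\dfrac{\<x,\nu\>}{1+\cos\theta\<\nu,e\>}$, same Neumann condition $\n_\mu\bar u=0$ on $\p M$ (proved via the same boundary identities), and the same maximum-principle argument. Your boundary computation is in fact slightly more detailed than the paper's.

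However, what you have written is a plan, not a proof: you defer the central computation and merely assert that ``the remaining curvature terms \emph{should} combine into $\L\bar u\big|_{p_0}\geq 0$'', calling this ``the heart of the argument''. The paper does carry this out, and the outcome is cleaner than you seem to anticipate. One does not need to evaluate at a critical point at all: the full identity (valid everywhere on $\S_t$) is
\[
\L\bar u \;=\; \bar u\Bigl(\frac{n|h|^2}{H^2}-1\Bigr)\;+\;\frac{2n}{H^2}\,\bigl\langle \n\bar u,\,\n(1+\cos\theta\<\nu,e\>)\bigr\rangle,
\]
obtained by first computing $\L\<x,\nu\>=\bigl[(1+\cos\theta\<\nu,e\>)\frac{n|h|^2}{H^2}-1\bigr]\<x,\nu\>$ and $\L\<\nu,e\>=(1+\cos\theta\<\nu,e\>)\frac{n|h|^2}{H^2}\<\nu,e\>$ separately and then applying the quotient rule. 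The second term is a gradient term and hence harmless for the maximum principle; the first term is $\geq 0$ by the elementary trace inequality $n|h|^2\geq H^2$. This last inequality is the one concrete ingredient your write-up never names, and it is precisely what makes the residue nonnegative ``without appealing to any extra convexity''. Once you insert it, the proof is complete and identical to the paper's.
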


\begin{proof}
A	direct computation  gives
	\begin{eqnarray*}
		\langle x,\nu\rangle_{;ij}  = h_{ij}+h_{ij;k}\langle x,e_k\rangle -(h^2)_{ij}\langle x,\nu\rangle .
	\end{eqnarray*}
	Using Proposition \ref{basic evolution eqs}, we have 
	\begin{eqnarray*}
		\p_t\langle x,\nu\rangle &=&\langle f\nu+\T,\nu\rangle +\langle x,-\n f+h(e_i,\T)e_i\rangle 
	\\&=&\frac{n(1+\cos\theta\langle \nu,e\rangle)}{H}-\langle x,\nu\rangle +h(x^T,\T) \\&&-\langle x,e_k\rangle \left(\frac{n\cos\theta \langle h_{ki}e_i,e\rangle}{H}- (1+\cos\theta \langle \nu,e\rangle)   \frac{nH_{;k}}{H^2}-\langle x,h_{ki}e_i\rangle \right),
	\end{eqnarray*}
	which yields 
	\begin{eqnarray}\label{evo of support fun}
		\L \langle x,\nu\rangle = \Big[ \big(1+\cos\theta \langle \nu,e\rangle\big)\frac{n|h|^2}{H^{2}} -1\Big]\langle x,\nu\rangle.
	\end{eqnarray}
It is easy to see	\begin{eqnarray*}
		\langle \nu,e\rangle_{;kl}= h_{kl;s}\langle e_s,e\rangle -(h^2)_{kl}\langle \nu,e\rangle .
	\end{eqnarray*}
Combining  it with
	\begin{eqnarray*}	\<e,\n f\>&=& n\<e,e_i\>   \left(\frac{1+\cos\theta \<\nu,e\>}{H}\right)_{;i}-\<e,e_i\> \<x,\nu\>_{;i}
		\\&=&\frac{n\cos\theta}{H} h(e^T,e^T)-nH^{-2}(1+\cos\theta \<\nu,e\>) \<\n H,e\>-h(x^T,e^T),	\end{eqnarray*}
	we have
	\begin{eqnarray*}
		\p_t \<\nu,e\>&=& -\<\n f,e\>+h(e_i,\T)\<e_i,e\>
		\\&=&-\frac{n\cos\theta}{H} h(e^T,e^T) +nH^{-2}(1+\cos\theta \<\nu,e\>) \<\n H,e\>+h(x^T,e^T)+h(\T,e^T).
	\end{eqnarray*}
It follows
	\begin{eqnarray}\label{evo of nu e}
		\L \<\nu,e\>= (1+\cos\theta \<\nu,e\>)nH^{-2} |h|^2 \<\nu,e\>.
	\end{eqnarray}
	Now we introduce the function mentioned in the introduction  
	\begin{eqnarray} \label{fun_u}
\bar u:=\frac{\<x,\nu\>}{1+\cos\theta \<\nu,e\>}.
	\end{eqnarray}
	One can check that it  satisfies
	\begin{eqnarray*}
 \L \bar u&=& \frac{1}{1+\cos\theta \<\nu,e\>} \L u -\frac{\<x,\nu\>}{(1+\cos\theta\<\nu,e\>)^2} \L (1+\cos\theta \<\nu,e\>) \notag \\&&+\frac{2n (1+\cos\theta \<\nu,e\>)}{H^2} \Bigg[\frac{\<x,\nu\>_{;i} (1+\cos\theta\<\nu,e\>)_{;i}}{(1+\cos\theta \<\nu,e\>)^2} \\&&- \<x,\nu\> \frac{(\cos\theta\<\nu,e\>)_{;i}(\cos\theta\<\nu,e\>)_{;i}}{(1+\cos\theta \<\nu,e\>)^3}\Bigg].
	\end{eqnarray*}
Combining with equations \eqref{evo of support fun} and \eqref{evo of nu e}, we get
	\begin{eqnarray}\label{evo of baru}
	\L \bar u=\frac{\<x,\nu\>}{1+\cos\theta \<\nu,e\>}\left(\frac{n|h|^2}{H^2}-1\right)	 +\frac{2n }{H^2} {\bar u}_{;i} (1+\cos\theta \<\nu,e\>)_{;i}.
\end{eqnarray}
On $\p M$, we have  \begin{eqnarray*} 
		\n_\mu \langle x,\nu\rangle  =\langle x, h(\mu,\mu)\mu\rangle =\cos \theta h(\mu,\mu)\langle x, \overline \nu\rangle =  \cot\theta h(\mu,\mu)\langle x,  \nu\rangle 
	\end{eqnarray*} 
	and
	\begin{eqnarray*}
		\n_\mu\big(1+\cos\theta \langle \nu,e \rangle \big)= -\sin\theta h(\mu,\mu)\langle \nu,e\rangle.
	\end{eqnarray*}
Altogether  yields
	\begin{eqnarray}\label{neuman of bar u}
		\n_{\mu} \bar u=0, \ \text{ on } \ \p M.
	\end{eqnarray}
Since
\begin{eqnarray}\label{eq_a2}
n|h|^2 \geq H^2,
\end{eqnarray}
\eqref{evo of baru} gives
\begin{eqnarray*}
	\L \bar u\geq 0,\quad \text{ mod } \n \bar u,
\end{eqnarray*}
which, together with 
 \eqref{neuman of bar u}, implies  
	\begin{eqnarray*}
		\bar u\geq \min_{M} \bar u(\cdot, 0)
			\end{eqnarray*} by the maximum principle.
Since $1+\cos\theta \<\nu,e\>\geq 1-\cos\theta >0$, then
	\begin{eqnarray*}
		\<x,\nu\>=(1+\cos\theta \<\nu,e\>) \cdot \bar u\geq c_0>0,
	\end{eqnarray*}for some positive constant $c_0$, which  depends  only on $\S_0$.	
\end{proof}
Next, we show the uniform bound of the mean curvature and the mean convexity is preserved along flow \eqref{flow00}.

The evolution equation of $H$ was computed in \cite[Proposition 4.3]{WWX2022}.
\begin{prop}[\cite{WWX2022}]\label{evol of F bdry} Along  flow \eqref{flow00}, there holds
	\begin{eqnarray}\label{evol of F}
	\L H&=&  2n H^{-2} \cos\theta h(\n H,e^T)-2nH^{-3}(1+\cos\theta \<\nu,e\>) |\n H|^2+H 
 \left(1-\frac{n|h|^2}{H^2}\right),\notag
\end{eqnarray}
and  
\begin{eqnarray}\label{neumann of F}
	\n_\mu H=0,\quad \text{ on } \p M.
\end{eqnarray}
\end{prop}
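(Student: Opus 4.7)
The plan is a direct computation starting from the general evolution identity $\partial_t H = -\Delta f - |h|^2 f + \langle \nabla H, \mathcal{T}\rangle$ in Proposition~\ref{basic evolution eqs}(6), applied to the specific velocity $f = n(1+\cos\theta\langle\nu,e\rangle)/H - \langle x,\nu\rangle$ of flow \eqref{flow00}. Forming $\mathcal{L}H$ amounts to subtracting the three drift pieces appearing in the definition of $\mathcal{L}$, so the real work is to expand $\Delta f$ and then match terms carefully against those drifts.

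For $\Delta f$ I would combine three standard Gauss-Codazzi identities on $\Sigma_t$:
\begin{align*}
\Delta\langle x,\nu\rangle &= H + \langle\nabla H, x^T\rangle - |h|^2 \langle x,\nu\rangle,\\
\Delta\langle \nu, e\rangle &= \langle \nabla H, e^T\rangle - |h|^2\langle \nu,e\rangle,\\
\Delta(1/H) &= -\Delta H/H^2 + 2|\nabla H|^2/H^3.
\end{align*}
The product rule applied to $\Delta[(1+\cos\theta\langle\nu,e\rangle)/H]$ then brings out the mixed term $-2\cos\theta\, h(\nabla H, e^T)/H^2$. Assembling $-\Delta f - |h|^2 f + \langle \nabla H,\mathcal{T}\rangle$ and subtracting the drift of $\mathcal{L}$, the expected cancellations are: the two $\Delta H$ contributions annihilate; $\langle\nabla H, x^T\rangle$ absorbs $-\langle x,\nabla H\rangle$; $-n\cos\theta\langle\nabla H, e^T\rangle/H$ absorbs the $e$-drift; $\langle\nabla H,\mathcal{T}\rangle$ cancels the $\mathcal{T}$-drift; and the two $|h|^2\langle x,\nu\rangle$ terms (one from $\Delta\langle x,\nu\rangle$, one from $-|h|^2 f$) cancel. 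What survives collapses, via the algebraic identity $\cos\theta\langle\nu,e\rangle - (1+\cos\theta\langle\nu,e\rangle) = -1$ on the $|h|^2/H$ coefficients, into exactly $2nH^{-2}\cos\theta\, h(\nabla H, e^T) - 2nH^{-3}(1+\cos\theta\langle\nu,e\rangle)|\nabla H|^2 + H(1 - n|h|^2/H^2)$. The bookkeeping is the only subtle part, but it is purely algebraic.

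For the Neumann condition $\nabla_\mu H = 0$ on $\partial M$, the idea is to exploit the capillary boundary condition together with the fact that $\partial\mathbb{R}^{n+1}_+$ is totally geodesic. Differentiating $\langle\nu, e\rangle = -\cos\theta$ tangentially along $\partial\Sigma_t$ yields the well-known boundary umbilicity $h(\mu, E_\alpha) = 0$ for every $E_\alpha$ tangent to $\partial\Sigma_t$. In an adapted orthonormal frame $\{E_1,\dots,E_{n-1},\mu\}$ near $\partial\Sigma_t$ and using the Codazzi identity $h_{i\mu;j} = h_{ij;\mu}$, one rewrites $\nabla_\mu H = \sum_\alpha h_{\alpha\mu;\alpha} + h_{\mu\mu;\mu}$ as a sum of tangential derivatives of $h_{\mu\alpha}$ along $\partial\Sigma_t$ plus Christoffel correction terms. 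The derivatives vanish because $h_{\mu\alpha}\equiv 0$ on $\partial\Sigma_t$, and the corrections collapse using boundary umbilicity and the constancy of $\theta$, yielding $\nabla_\mu H = 0$. This matches the argument of \cite[Proposition 4.3]{WWX2022} to which the present statement refers, so the proof essentially reduces to that earlier computation.
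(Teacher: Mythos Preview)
Your proposal is correct and matches the paper's treatment: the paper gives no proof of its own here but simply cites \cite[Proposition~4.3]{WWX2022}, and your sketch is exactly the standard computation behind that reference---expanding $\Delta f$ via the identities for $\Delta\langle x,\nu\rangle$, $\Delta\langle\nu,e\rangle$, $\Delta(1/H)$ and matching the drift terms of $\mathcal L$ produces the stated formula, while the Neumann condition comes from $h(\mu,E_\alpha)=0$ (a consequence of the constant contact angle and the total geodesicity of $\partial\mathbb R^{n+1}_+$) combined with Codazzi. One minor point: in your boundary outline the term $h_{\mu\mu;\mu}$ is not itself a tangential derivative plus Christoffel correction, so the cancellation there requires the additional relation $\tilde h_{\alpha\beta}=\cot\theta\, h_{\alpha\beta}$ between the second fundamental forms of $\partial\Sigma\subset\Sigma$ and $\Sigma\subset\mathbb R^{n+1}_+$, which is precisely what \cite{WWX2022} supplies.
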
 
\begin{prop}\label{upper bdd of F}
	If $\Sigma_{t}$ solves  flow \eqref{flow00}, 
	then
	\begin{eqnarray}\label{F curv lower bound}
		H(p,t)\leq \max_M H(\cdot, 0).
\end{eqnarray} \end{prop}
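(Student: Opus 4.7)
The plan is to apply the parabolic weak maximum principle directly to the evolution equation \eqref{evol of F} for $H$, combined with the Hopf boundary-point lemma to handle the Neumann-type boundary condition \eqref{neumann of F} on $\partial M$.

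The key algebraic observation is that the only non-gradient term on the right-hand side of \eqref{evol of F} is $H(1 - n|h|^2/H^2)$, and by the Cauchy--Schwarz inequality $H^2 = (\sum_i \kappa_i)^2 \leq n\sum_i \kappa_i^2 = n|h|^2$, this coefficient is $\leq 0$ whenever $H > 0$. Positivity of $H$ is guaranteed on the (short-time) interval of smooth existence by the strict mean convexity of $\Sigma_0$ and continuity. Reading \eqref{evol of F} as a linear parabolic equation for $H$ with coefficients depending on the geometry of $\Sigma_t$, we conclude that $H$ is a subsolution of an operator whose zeroth-order coefficient is non-positive and whose leading coefficient $n(1+\cos\theta\langle\nu,e\rangle)/H^2$ is strictly positive.

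Suppose, for contradiction, that $H(p_0, t_0) > \max_M H(\cdot, 0)$ for some $t_0 > 0$, and let $(p_*, t_*)$ with $t_* > 0$ be a space-time maximum of $H$ on $M \times [0, t_0]$. If $p_*$ is an interior point, then $\nabla H = 0$ and $\Delta H \leq 0$ at $(p_*, t_*)$; all gradient terms in \eqref{evol of F} vanish, the Laplacian contribution is non-positive, and we obtain
\[
0 \leq \partial_t H \leq H\left(1 - \frac{n|h|^2}{H^2}\right) \leq 0.
\]
The standard trick of replacing $H$ by $H - \varepsilon t$ and sending $\varepsilon \to 0^+$ upgrades this to a strict contradiction. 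If instead $p_* \in \partial M$, the parabolic Hopf boundary-point lemma forces $\nabla_\mu H(p_*, t_*) > 0$, contradicting \eqref{neumann of F}.

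The main (and rather mild) obstacle is to verify the hypotheses of the Hopf lemma, namely that the coefficients of the linear operator acting on $H$ are bounded and its leading coefficient is strictly positive on the region where the argument is applied. Both follow on any finite subinterval of smooth existence from the preceding a priori estimates: Proposition \ref{c0 est} bounds the ambient position and hence $\langle\nu,e\rangle$, while Proposition \ref{c1-est to flow} provides the lower bound $\langle x,\nu\rangle \geq c_0 > 0$ controlling $\bar u$, and on a short interval $|h|$ is bounded by the short-time smoothness theory. No additional analytic input beyond the classical Hopf lemma and the inequality $n|h|^2 \geq H^2$ is required, in complete parallel with the closed-hypersurface case.
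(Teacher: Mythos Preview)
Your proof is correct and follows essentially the same approach as the paper: combine the evolution equation \eqref{evol of F} with the elementary inequality $n|h|^2\ge H^2$ to see that $\L H\le 0$ modulo gradient terms, then invoke the maximum principle together with the Neumann condition \eqref{neumann of F}. The paper states this in two lines, while you spell out the interior/boundary dichotomy and the Hopf lemma explicitly, but the argument is the same.
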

\begin{proof}
Using \eqref{eq_a2}, by \eqref{evol of F}, we see
	\begin{eqnarray*}
		\L H =H\left(1-\frac{n|h|^2}{H^2}\right) \leq 0,\quad \text{ mod } \n H.
	\end{eqnarray*}
	Since $\n_\mu H=0$ on $\p M$, the conclusion directly follows from the maximum principle.	
\end{proof}

From the evolution equation for $H$ one could not obtain the lower bound of $H$.  Instead, we consider the function $H\bar u$.
\begin{prop}\label{lower bds of F}
	If $\Sigma_{t}$ solves  flow \eqref{flow00} with an initial hypersurface $\Sigma_{0}$ being a  star-shaped and strictly mean convex capillary hypersurface   in $\overline{\RR^{n+1}_+}$, then
	\begin{eqnarray}\label{mean curv lower bound imcf}
	H(p,t)\geq C,\quad\quad \forall (p, t)\in M\times [0,T),
	\end{eqnarray}	 
	where the positive constant  $C$ depends only on the initial datum.
\end{prop}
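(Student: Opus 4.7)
The plan is to consider the auxiliary function $P:=\bar u H$, as suggested by the authors in the introduction, and exploit a favourable cancellation in its evolution. By the $C^{0}$ bound from Proposition \ref{c0 est} and because $\theta\in(0,\pi)$ forces $1+\cos\theta\langle\nu,e\rangle\ge 1-|\cos\theta|>0$, the function $\bar u=\langle x,\nu\rangle/(1+\cos\theta\langle\nu,e\rangle)$ is uniformly bounded from above along the flow; consequently, a positive uniform lower bound for $P$ will immediately yield one for $H$, which is precisely \eqref{mean curv lower bound imcf}.

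First I would apply the parabolic product rule
\begin{eqnarray*}
\L(\bar u H)=\bar u\,\L H+H\,\L\bar u-\frac{2n(1+\cos\theta\langle\nu,e\rangle)}{H^{2}}\langle\nabla\bar u,\nabla H\rangle,
\end{eqnarray*}
and substitute the evolution equations \eqref{evol of F} for $H$ and \eqref{evo of baru} for $\bar u$. The crucial algebraic point is that the zero-order contributions, namely $\bar u H(1-n|h|^{2}/H^{2})$ from $\bar u\L H$ and $H\bar u(n|h|^{2}/H^{2}-1)$ from $H\L\bar u$, cancel exactly. What remains is purely first order in the gradients: the $h(\nabla H,e^{T})$ and $|\nabla H|^{2}$ pieces coming from $\bar u\L H$, the contribution $\frac{2n\cos\theta}{H}h(\nabla\bar u,e^{T})$ coming from $H\L\bar u$ after using $\nabla\langle\nu,e\rangle=h(\cdot,e^{T})$, and the cross term displayed above.

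Next I would use the identity $H\nabla\bar u=\nabla P-\bar u\nabla H$ to eliminate every appearance of $\nabla\bar u$. A direct check shows that the $h(\cdot,e^{T})$ contributions combine into $\frac{2n\cos\theta}{H^{2}}h(\nabla P,e^{T})$, while the $|\nabla H|^{2}$ term together with the cross term combine into $-\frac{2n(1+\cos\theta\langle\nu,e\rangle)}{H^{3}}\langle\nabla H,\nabla P\rangle$. Hence $\L P=\langle V,\nabla P\rangle$ for an explicit vector field $V$ built from $h$, $\nabla H$, $H$, and $e^{T}$; that is, $P$ satisfies a homogeneous linear first-order perturbation of $\L$. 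Combined with $\nabla_{\mu}P=\bar u\,\nabla_{\mu}H+H\,\nabla_{\mu}\bar u=0$ on $\partial M$ (from \eqref{neumann of F} and \eqref{neuman of bar u}), the parabolic maximum principle with Neumann boundary condition yields $\min_{M}P(\cdot,t)\ge\min_{M}P(\cdot,0)>0$; the positivity of the initial minimum uses the strict mean convexity of $\Sigma_{0}$ together with $\langle x,\nu\rangle>0$ from star-shapedness. Dividing by the uniform upper bound of $\bar u$ then gives \eqref{mean curv lower bound imcf}.

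The main obstacle is the choice of test function. Neither $H$ on its own nor the classical support function $\langle x,\nu\rangle$ produces a closed evolution in which the obstruction $n|h|^{2}/H^{2}$ disappears; the specific anisotropic support function $\bar u$ from \eqref{fun_u} is precisely tuned so that multiplying it by $H$ kills these obstructions on both sides at once. This cancellation is the true heart of the argument and is what makes it go through for the whole angle range $\theta\in(0,\pi)$, including the difficult obtuse case where the reflection argument and $\theta\le\pi/2$ techniques of \cite{WWX2022, Wei_el} are unavailable.
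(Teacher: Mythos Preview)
Your proposal is correct and follows essentially the same approach as the paper: define $P=\bar uH$, observe that the zero-order $n|h|^{2}/H^{2}$ terms cancel in $\L P$, rewrite the remaining gradient terms via $H\nabla\bar u=\nabla P-\bar u\nabla H$ to obtain $\L P=\langle V,\nabla P\rangle$, and conclude with the maximum principle together with $\nabla_{\mu}P=0$. Your explicit form of $V$ agrees exactly with the paper's formula $\L P=\frac{2n}{H^{2}}\big\langle \nabla(1+\cos\theta\langle\nu,e\rangle)-(1+\cos\theta\langle\nu,e\rangle)\frac{\nabla H}{H},\,\nabla P\big\rangle$, since $\nabla(1+\cos\theta\langle\nu,e\rangle)=\cos\theta\,h(\cdot,e^{T})$.
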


\begin{proof}
	Define  the function 
	\begin{eqnarray}\label{fun_P}P:=H \bar u .
	\end{eqnarray}	
	Using  \eqref{evo of baru} and \eqref{evol of F}, we obtain
	\begin{eqnarray*}
		\L P&=& \bar u\mathcal{L}H+H\mathcal{L} \bar u-2 n(1+\cos\theta \<\nu,e\>)\frac{\<\n  H,\n \bar u\>  }{H^2} \\
	\\&=& \frac{2n}{H^{2}}\left\< \n (1+\cos\theta \<\nu,e\>) -(1+\cos\theta \<\nu,e\>)  \frac{\n H}{H}, \n P\right\>.
	\end{eqnarray*}
	On $\p M$,   \eqref{neuman of bar u} and \eqref{neumann of F} imply  \begin{eqnarray}\label{neuman of P imcf}
		\nabla_{\mu}P= 0. 
	\end{eqnarray}
By applying the maximum principle, we get
	\begin{eqnarray*}
		P\geq \min_{M} H \bar u(\cdot,0),
	\end{eqnarray*}together  with the uniform upper bound of  $\bar u$ (which follows from Proposition \ref{prop_3.2}), it implies the desired uniform estimate of $H$.
\end{proof}

\
\

\section{Long-time existence and convergence}\label{sec4}
	The short-time existence of flow \eqref{flow00} is well-known.  Let $T^*$ be the maximal time of the  existence of a smooth  solution to \eqref{flow00}
in the class of star-shaped and strictly mean convex hypersurfaces. In order to establish the long-time existence of flow \eqref{flow00}, we need to obtain the uniform height estimate, the gradient estimate, and the second order derivative estimate (or curvature estimates) for the solution of flow \eqref{flow00}. Then the long-time existence and the uniform $C^{\infty}$-estimates follow from the standard nonlinear parabolic PDE theory with a strictly oblique boundary condition (cf. \cite{Dong,LSU,Ura}).

Assume that a capillary hypersurface $\S\subset \overline{\RR^{n+1}_+}$ is star-shaped with respect to the origin. 
One can reparametrize it as a graph over $\overline{\mathbb{S}^n_+}$. Namely, 
there exists a positive function $\rho$ defined on $\overline{\SS^n_+}$ such that
\begin{eqnarray*}
	\S=\left\{ \rho (X )X |    X\in \overline{\SS}^n_+\right\},
\end{eqnarray*}where $X:=(X_1,\ldots, X_n)$ is a local coordinate of $\overline{\SS^n_+}$.

We  denote  $\n^0$  the Levi-Civita connection on $\SS^n_+$ with respect to the standard round metric $\sigma:=g_{_{\SS^n_+}}$, $\p_i:=\p_{X_i}$,  $\sigma_{ij}:=\sigma(\p_{i},\p_{j})$,  $\rho_i:=\n^0_{i} \rho$, and $\rho_{ij}:=\n^0_{i}\n^0_j \rho$. The induced metric $g$ on $\S$ is given by
\begin{eqnarray*}
	g_{ij}=\rho^2\sigma_{ij}+ \rho_i\rho_j=e^{2\varphi}\left(\sigma_{ij}+\varphi_i\varphi_j\right),
\end{eqnarray*}where  $\varphi(X):=\log \rho(X)$. Its inverse $g^{-1}$ is given by
\begin{eqnarray*}
	g^{ij}=\frac{1}{\rho^2} \left(\sigma^{ij}-\frac{\rho^i\rho^j}{\rho^2+|\n^0 \rho|^2}\right)=e^{-2\varphi}\left( \sigma^{ij}-\frac{\varphi^i\varphi^j}{v^2}\right),
\end{eqnarray*}where $\rho^i:=\sigma^{ij}\rho_j$,  $\varphi^i:=\sigma^{ij}\varphi_j$ and $$v:=\sqrt{1+|\n^0 \varphi|^2}.$$
The unit outward normal vector field  on $\S$ is given by
\begin{eqnarray*}
	\nu=\frac{1}{v}\left( \p_\rho-\rho^{-2}\n^0  \rho \right)=\frac{1}{v}\left( \p_\rho-\rho^{-1}\n^0 \varphi\right).
\end{eqnarray*}
The second fundamental form $h:=(h_{ij})$ on $\S$ is
\begin{eqnarray*}
	h_{ij}=\frac{e^\varphi}{v }\left( \sigma_{ij}+\varphi_i\varphi_j-\varphi_{ij}\right),
\end{eqnarray*}and its Weingarten matrix $(h^i_j)$ is
\begin{eqnarray*}
	h_j^{i}=g^{ik}h_{kj}=\frac{1}{e^\varphi v }\left[ \delta^i_j-\left(\sigma^{ik}-\frac{\varphi^i\varphi^k}{v^2}\right)\varphi_{kj}\right].
\end{eqnarray*}
We also have for the mean curvature,
\begin{eqnarray}\label{express H-mean}
	H=\frac{1}{e^\varphi v}\left[ n-\left(\s^{ij}-\frac{\varphi^i\varphi^j}{v^2} \right)\varphi_{ij}\right].
\end{eqnarray}

Next, we derive the capillary boundary condition in \eqref{flow00} in terms of the radial function $\varphi$. We use the polar coordinate in 
the half-space, that is, for $x:=(x',x_{n+1})\in \RR^n\times [0,+\infty)$  and $X:=(\beta,\xi)\in [0,\frac{\pi}{2}]\times \SS^{n-1}$, we have
\begin{eqnarray*}
	x_{n+1}=r\cos\beta,\quad |x'|=r\sin\beta.
\end{eqnarray*} 
Then
\begin{eqnarray*}
	e_{n+1}=	\p_{x_{n+1}}=\cos\beta \p_r-\frac{\sin\beta}{r}\p_\beta.
\end{eqnarray*} 
In these coordinates the standard Euclidean metric is given by
$$ |dx|^2=dr^2+r^2\left(d\beta^2+\sin^2\beta g_{_{\SS^{n-1}}}\right).$$
It follows that
\begin{eqnarray}\label{express nu e}
	\langle \nu,e_{n+1}\rangle =\frac{1}{v}\left(\cos\beta+\sin\beta \n_{\p_\beta}^0   \varphi \right).
\end{eqnarray}

Along $\p\SS^n_+$, i.e., $\beta=\frac \pi 2$, it holds
\begin{eqnarray*}  	e=-e_{n+1}
	=\frac{1}{r}\p_\beta,
\end{eqnarray*}
then
\begin{eqnarray*}
	-\cos\theta =	\langle \nu,e\rangle =\left\langle \frac{1}{v}\left( \p_r-r^{-1}\n^0 \varphi\right), \frac{1}{r}\p_\beta \right\rangle=-\frac{\n^0_{\p_\beta}\varphi}{v} ,
\end{eqnarray*} that is,
\begin{eqnarray} \label{capillary bdry}
	\n^0_{\p_\beta} \varphi=\cos\theta  \sqrt{1+|\n^0\varphi|^2}.
\end{eqnarray}

In the following, we derive the full curvature estimates (which in turn are the $C^2$ estimates of $\varphi$) of  flow \eqref{flow00}.

To prove the existence of flow \eqref{flow00} for all time, we follow the approach in \cite[Section 4]{Marquardt2013} and \cite[Chapter V]{LSU}, which proves the  Schauder estimates without having direct access to a second derivative estimate of $\varphi$. 

Consider flow \eqref{flow00} for radial graphs over $\overline{\SS^n_+}$. It is known that the scalar function $\varphi(z,t):=\log \rho (X(z,t),t)$   satisfies
\begin{eqnarray}\label{scalar flow with capillary}
	\begin{array}{rcll}
		\p_t \varphi &=&G( \varphi_{ij} , \varphi_k),\quad &\text{ in } \SS^n_+\times [0,T^*),  \\
		\n_{\p_\beta}^0 \varphi&=&\cos\theta \sqrt{1+|\n^0\varphi|^2},\quad &\text{ on }
		\p\SS^n_+\times [0,T^*),\\
		\varphi(\cdot,0)&=&\varphi_0(\cdot),\quad &\text{ on } \SS^n_+,
	\end{array}
\end{eqnarray}
where $\p_\beta$ is the outward normal of $\p \SS^n_+\subset \overline{\SS^n_+}$,
 $\varphi_0 $ is the corresponding radial function of $x_0(M)$ over $\overline{\SS^n_+}$ and
\begin{eqnarray*}
	G(\varphi_{ij},\varphi_k):=    \frac{nv^2}{n-\left(\s^{ij}-\frac{\varphi^i\varphi^j}{v^2} \right)\varphi_{ij}} \left[1- \frac{\cos\theta}{v}\left( \cos\beta+\sin\beta \n^0_{\p_\beta}\varphi \right)  \right]-1. 
\end{eqnarray*}
Note that $G$ does not explicitly depend on the solution $\varphi$ itself, but only on its first and second derivatives, which is an important fact. Set
\begin{eqnarray*}
	G^{ij}:=\frac{\p G}{\p \varphi_{ij}}, ~~ G^k:=\frac{\p G}{\p \varphi_k}.
\end{eqnarray*}
Recall that  $T^*$ is the maximal time such that there exists some $\varphi \in C^{2,1}(\overline{\SS^n_+}\times [0,T^*))\cap C^\infty(\overline{\SS^n_+}\times (0,T^*))$ which solves \eqref{scalar flow with capillary}. In the sequel, we will prove a priori estimates for the solution $\varphi$ on $[0, T]$ for any $T<T^*$.
\subsection{Gradient estimates} First of all,  we give the estimate of $\dot{\varphi}:=\frac{\p\varphi}{\p t}$.
\begin{prop}\label{time derivative varphi est}
If $\varphi$ solves \eqref{scalar flow with capillary} and $\theta \in (0,\pi)$, then
	\begin{eqnarray*}
	&&	\min_{ \overline{\SS^n_+} } \frac{nv_0}{ e^{\varphi_0} H_0}\left[1- \frac{\cos\theta}{v}\left( \cos\beta+\sin\beta \n^0_{\p_\beta}\varphi_0 \right)  \right]\leq \dot{\varphi}+1\\&& \qquad \leq  \max_{\overline{\SS^n_+}}
	 \frac{nv_0}{ e^{\varphi_0} H_0}\left[1- \frac{\cos\theta}{v}\left( \cos\beta+\sin\beta \n^0_{\p_\beta}\varphi_0 \right)  \right], \quad \text{in}\quad \SS^n_+\times[0,T],
	\end{eqnarray*}
where $H_0:=H(\cdot,0)$ and $v_0:=v(\cdot,0)$.
\end{prop}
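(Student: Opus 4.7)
The plan is to view $\psi := \dot{\varphi}$ as the solution of a linear, uniformly parabolic PDE with a strictly oblique linear boundary condition on $\overline{\SS^n_+}$, and then apply the parabolic maximum principle together with Hopf's lemma. The crucial structural observation, already emphasized after \eqref{scalar flow with capillary}, is that $G = G(\varphi_{ij}, \varphi_k)$ does not depend on $\varphi$ itself, so differentiating \eqref{scalar flow with capillary} in $t$ produces a purely linear equation
\begin{equation*}
\p_t \psi = G^{ij}\psi_{ij} + G^{k}\psi_k, \qquad \text{in } \SS^n_+ \times (0,T],
\end{equation*}
with no zeroth-order term. Uniform parabolicity follows from combining the $C^0$ estimate (Proposition \ref{c0 est}), the lower bound on $\<x,\nu\>$ (Proposition \ref{c1-est to flow}), and the two-sided mean-curvature bounds (Propositions \ref{upper bdd of F}--\ref{lower bds of F}), which together pin down $\rho$, $v$, and $H$ from above and below.

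Next I differentiate the capillary boundary condition $\n^0_{\p_\beta}\varphi = \cos\theta\sqrt{1+|\n^0\varphi|^2}$ in $t$, obtaining on $\p\SS^n_+ \times (0,T]$
\begin{equation*}
\n^0_{\p_\beta}\psi = \frac{\cos\theta}{v}\bigl(\varphi^\beta\psi_\beta + \<\n'\varphi,\n'\psi\>\bigr),
\end{equation*}
where $\n'$ denotes the tangential gradient along $\p\SS^n_+$. By \eqref{capillary bdry} one has $\varphi^\beta = \cos\theta\,v$ on the boundary, so the coefficient of $\psi_\beta$ collapses to $1-\cos^2\theta = \sin^2\theta$, which is strictly positive for $\theta\in(0,\pi)$. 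Hence the linearized boundary condition is strictly oblique and takes the form $\sin^2\theta\cdot\psi_\beta = \tfrac{\cos\theta}{v}\<\n'\varphi,\n'\psi\>$, a homogeneous linear condition involving only tangential data of $\psi$ on the right-hand side.

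With this setup, a standard maximum-principle argument closes the proof. An interior spatial maximum of $\psi$ at positive time is excluded by the strong parabolic maximum principle; a maximum on $\p\SS^n_+$ at some $t>0$ forces $\n'\psi = 0$ there, hence the oblique boundary condition yields $\psi_\beta = 0$, contradicting Hopf's lemma unless $\psi$ is constant. The symmetric argument applies to the minimum, so $\min_{t=0}\dot{\varphi} \leq \dot{\varphi}(\cdot,t) \leq \max_{t=0}\dot{\varphi}$ on $\overline{\SS^n_+}\times[0,T]$. Finally, rewriting the right-hand side of \eqref{scalar flow with capillary} using \eqref{express H-mean} and \eqref{express nu e} (with $e=-e_{n+1}$) gives the identity
\begin{equation*}
\dot{\varphi}+1 \;=\; \frac{nv}{e^{\varphi}H}\left[1-\frac{\cos\theta}{v}\bigl(\cos\beta+\sin\beta\,\n^0_{\p_\beta}\varphi\bigr)\right],
\end{equation*}
and evaluating at $t=0$ yields the explicit expression in the stated bounds. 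The main obstacle is verifying the strict obliqueness — specifically that the coefficient of $\psi_\beta$ in the linearized boundary condition equals $\sin^2\theta$ — because this is what allows Hopf's lemma to rule out boundary extrema for the full angle range $\theta\in(0,\pi)$; once this computation is in place, everything else is a routine application of the maximum principle for linear oblique parabolic problems in the spirit of \cite{LSU}.
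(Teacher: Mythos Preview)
Your proof is correct and follows essentially the same route as the paper: differentiate \eqref{scalar flow with capillary} in $t$ to obtain the linear equation $\partial_t\dot\varphi = G^{ij}\dot\varphi_{ij} + G^k\dot\varphi_k$ with no zeroth-order term, differentiate the boundary condition to reduce the normal derivative to $\sin^2\theta\cdot(\dot\varphi)_\beta$ at a boundary extremum (where tangential derivatives vanish), and conclude via the maximum principle and Hopf's lemma. The only minor difference is that you invoke the earlier propositions for uniform parabolicity, whereas the paper simply uses that $H>0$ on $[0,T]\subset[0,T^*)$; this extra step is harmless but unnecessary for the qualitative maximum-principle argument.
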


\begin{proof}
	Assume $\dot{\varphi}$ admits the maximum value at some point
	$(p_0,t_0)\in \overline{\SS^n_+}\times [0,T]$, that is
	\begin{eqnarray*}
		\dot{\varphi}(p_0,t_0)=\max_{ \overline{\SS^n_+}\times [0,T]} \dot{\varphi}.
	\end{eqnarray*}
We claim that $t_0=0$. If not, we have $t_0>0$. A
	direct computation gives 
	\begin{eqnarray*}
		\frac{\partial \dot{\varphi}}{\partial
			t}=\sum_{i,j=1}^{n}G^{ij}  \dot{\varphi}_{ij}+\sum_{k=1}^{n}G^k
	  \dot{\varphi}_k\quad \text{in}\quad\overline{\SS^n_+}  \times [0, T].
	\end{eqnarray*}
The maximum principle yields that  $p_0\in \partial
	\SS^n_+$. Choosing an orthonormal frame around $p_0$ such that
	$\{e_{\alpha}\}_{\alpha=1}^{n-1}$ are the tangential vector fields
	on $\partial \SS^n_+$ and $e_n:=\p_\beta$  the unit normal along $\p\SS^n_+\subset \overline{\SS^n_+} $. Then we have
	$(\dot{\varphi})_{\alpha}(p_0,t_0)=0$. Combining it with the boundary equation in \eqref{scalar flow with capillary} yields that
	\begin{eqnarray*}
		 (\dot{\varphi})_n(p_0,t_0)&=&\frac{d}{dt} \big(\cos\theta
			\sqrt{1+|\nabla^0 \varphi|^2}\big)
			\\&=&\sum_{i=1}^{n}\frac{ \varphi_i(\varphi_{i})_{t}}{
		\sqrt{1+|\nabla^0 \varphi|^2}	}\cos\theta=\frac{\varphi_n {(\dot{\varphi})}_{n}}{\sqrt{1+|\nabla^0 \varphi|^2}}\cos\theta=\cos^2\theta {(\dot{\varphi})}_{n},
	\end{eqnarray*}
Since  $|\cos\theta|<1$, we obtain $({\dot{\varphi}})_n(p_0,t_0) =0$, which is
	a contradiction to the Hopf boundary lemma. Therefore $t_0=0$, as a result,  
	\begin{eqnarray*}
 \dot{\varphi}\leq \max_{\overline{\SS^n_+}}\dot{\varphi_0}.
	\end{eqnarray*}Similarly, we have
\begin{eqnarray*}
 \dot{\varphi}\geq \min_{\overline{\SS^n_+}}\dot{\varphi_0}.
\end{eqnarray*}
Both, together with \begin{eqnarray*}
	G=\frac{nv}{e^\varphi H} \left[1- \frac{\cos\theta}{v}\left( \cos\beta+\sin\beta \n^0_{\p_\beta}\varphi \right)  \right]-1,
\end{eqnarray*}give the desired estimate. Hence we complete the proof.
\end{proof}

Proposition \ref{c1-est to flow} says that
the star-shapedness is preserved along flow \eqref{flow00}. In particular, combining with Proposition \ref{c0 est},  Proposition \ref{time derivative varphi est} yields the uniform gradient estimate of $\varphi$ in \eqref{scalar flow with capillary}. That is

\begin{prop}\label{grad est of varphi}
	If $\varphi$ solves \eqref{scalar flow with capillary}, then
	\begin{eqnarray*}
		\| \varphi \|_{C^1(\overline{\SS^n_+}\times [0,T])}\leq C,
	\end{eqnarray*}where $C>0$ depends only on the initial datum.
\end{prop}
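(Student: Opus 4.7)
The plan is to assemble the uniform $C^1$ bound on $\varphi$ directly from the geometric a priori estimates already in place, using standard formulas for radial graphs; no fresh PDE argument is needed here.

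For the $C^0$ bound, Proposition \ref{c0 est} locates $\Sigma_t$ inside the closed cap region $\widehat{\C_{\theta,r_2}}\setminus\widehat{\C_{\theta,r_1}}$. Each spherical cap $\C_{\theta,r_j}$ is itself star-shaped with respect to the origin: plugging $x_{n+1}=\rho\cos\beta$ and $|x'|=\rho\sin\beta$ into $|x-r_j\cos\theta\,e|^2=r_j^2$ produces the quadratic
\begin{equation*}
\rho^2 + 2r_j\cos\theta\cos\beta\,\rho - r_j^2\sin^2\theta = 0,
\end{equation*}
whose positive root is a strictly positive continuous function of $\beta$ on $[0,\pi/2]$ for every $\theta\in(0,\pi)$. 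Thus $\rho$ is pinched between two such functions depending only on $r_1,r_2,\theta$, and consequently $\|\varphi\|_{C^0}\le C$.

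For the gradient, I would exploit the identity
\begin{equation*}
\langle x,\nu\rangle = \frac{\rho}{v},\qquad v=\sqrt{1+|\n^0\varphi|^2},
\end{equation*}
which follows immediately from the formula $\nu=\frac{1}{v}(\p_\rho - \rho^{-1}\n^0\varphi)$ recalled before \eqref{express H-mean}, together with $\n^0\varphi$ being tangent to $\SS^n_+$. Proposition \ref{c1-est to flow} gives $\langle x,\nu\rangle\ge c_0 > 0$, while the previous paragraph gives $\rho\le c_2$. Together they force $v\le c_2/c_0$, hence $|\n^0\varphi|^2\le (c_2/c_0)^2-1$, which is the desired gradient estimate.

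An alternative route uses Proposition \ref{time derivative varphi est}: from the explicit form of $G$ one has $\dot\varphi+1 = \frac{nv(1+\cos\theta\langle\nu,e\rangle)}{e^\varphi H}$, so the upper bound on $\dot\varphi+1$, combined with the $C^0$ bound on $\varphi$, the upper bound on $H$ (Proposition \ref{upper bdd of F}), and the universal inequality $1+\cos\theta\langle\nu,e\rangle \ge 1-|\cos\theta|>0$, again yields $v\le C$. Either way, the present proposition is essentially a corollary of the geometric a priori bounds, and the main obstacle does not lie here but in Proposition \ref{c1-est to flow}, where the crucial insight was the anisotropic test function $\bar u=\langle x,\nu\rangle/(1+\cos\theta\langle\nu,e\rangle)$ whose Neumann boundary condition $\n_\mu\bar u=0$ makes the maximum principle applicable throughout the whole range $\theta\in(0,\pi)$.
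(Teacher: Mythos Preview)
Your argument is correct and follows exactly the line the paper indicates: the proposition is stated there immediately after the sentence ``Proposition \ref{c1-est to flow} says that the star-shapedness is preserved along flow \eqref{flow00}. In particular, combining with Proposition \ref{c0 est}, Proposition \ref{time derivative varphi est} yields the uniform gradient estimate,'' with no further details given. Your write-up simply makes explicit the two ingredients the paper leaves to the reader---the $C^0$ bound on $\rho$ from the barrier caps and the identity $\langle x,\nu\rangle=\rho/v$ converting the lower bound on the support function into an upper bound on $v$---so the approach is the same.
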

\subsection{Schauder's estimate and long-time existence}

In this subsection, we follow the argument in \cite{Marquardt2013} (see also \cite{Xia}) to obtain the
higher regularity of flow \eqref{flow00}, i.e.  \eqref{scalar flow with capillary}. First, we estimate the H\"{o}lder norm of the spatial derivative of $\varphi$.
\begin{prop}\label{holder of spatial der}
If $\varphi$ solves \eqref{scalar flow with capillary}, then
there exists some $\gamma
>0 $ such that
\begin{eqnarray*}
	[\n^0\varphi]_{x,\gamma}+[\n^0 \varphi ]_{t,\frac{\gamma}{2}} \leq C,
\end{eqnarray*}where $[\psi]_{z,\gamma}$ denotes the $\gamma$-H\"{o}lder semi-norm of $\psi$ in $\overline{\SS^n_+}\times[0,T]$ with respect to the $z$-variable and  $C:=C(\|\varphi_0\|_{C^{2+\alpha}(\overline{\SS^n_+})}, n, \gamma   )$ .
\end{prop}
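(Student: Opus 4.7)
The plan is to follow the approach of Marquardt \cite{Marquardt2013} and Xia \cite{Xia}, treating \eqref{scalar flow with capillary} as a quasilinear parabolic equation of divergence type with a strictly oblique boundary condition. The key identity on $\SS^n_+$,
\[
v\,e^{\varphi} H \;=\; n \;-\; v\,\text{div}_\sigma\!\left(\frac{\nabla^0 \varphi}{v}\right),
\]
combined with \eqref{scalar flow with capillary}, yields the divergence-type relation
\[
\text{div}_\sigma\!\left(\frac{\nabla^0\varphi}{v}\right) \;=\; \frac{n}{v} \;-\; \frac{n v K}{\dot\varphi + 1},\qquad K := 1 - \tfrac{\cos\theta}{v}\bigl(\cos\beta + \sin\beta\,\varphi_\beta\bigr).
\]
Since $\nabla^0\varphi$ is uniformly bounded (Proposition \ref{grad est of varphi}) and $\dot\varphi+1$ is uniformly bounded above and below by positive constants (Proposition \ref{time derivative varphi est}), the right-hand side is a bounded measurable function of $\nabla^0\varphi$, $\dot\varphi$, and the base point, while the left-hand side is a uniformly elliptic quasilinear divergence operator.

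Next, I would differentiate this equation in a local tangential direction $e_k$ on $\SS^n_+$. Since $\text{div}_\sigma(\nabla^0\varphi/v) = \text{div}_\sigma(A(\nabla^0\varphi))$ for the smooth vector field $A(p) := p/\sqrt{1+|p|^2}$, this differentiation produces a linear parabolic equation in divergence form for $w := \varphi_k$ of the shape
\[
\partial_t w \;-\; \partial_i\!\bigl(a^{ij}\partial_j w\bigr) \;=\; \partial_i f^i + g,
\]
with coefficients $a^{ij},\,f^i,\,g$ bounded and measurable on $\overline{\SS^n_+}\times[0,T]$ and $(a^{ij})$ uniformly elliptic, all depending only on the $C^1$ bound of $\varphi$. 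Tangential differentiation of the capillary condition $\varphi_\beta = \cos\theta\sqrt{1+|\nabla^0\varphi|^2}$ produces a linear Robin-type oblique boundary condition for $w$ of the form
\[
w_\beta \;=\; \frac{\cos\theta}{v}\,\sigma^{lm}\varphi_l\,w_m \qquad\text{on } \partial\SS^n_+.
\]
Using $\varphi_\beta = \cos\theta\,v$ on the boundary, one checks that this reduces to $\sin^2\theta\cdot w_\beta = \cos\theta\sum_{\alpha<n}\varphi_\alpha w_\alpha/v$, which is strictly oblique for $\theta\in(0,\pi)$. The De Giorgi–Nash–Moser H\"older regularity theorem for divergence-form parabolic equations with oblique boundary conditions (in the form developed by Lieberman) then delivers
\[
[w]_{x,\gamma} + [w]_{t,\gamma/2} \;\leq\; C
\]
for some $\gamma > 0$ and $C$ depending only on the initial data. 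H\"older regularity of the normal derivative $\varphi_\beta$ on $\partial\SS^n_+$ follows directly from the capillary boundary equation, and the interior case is handled symmetrically.

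The main technical obstacle is making the divergence structure genuinely useful after differentiation and handling the linearized oblique boundary condition. Concretely, one must arrange the differentiated equation so that all second-derivative terms are absorbed into the divergence, and then verify that the resulting Robin condition retains its strict obliqueness uniformly on $[0,T]$. The hypothesis $|\cos\theta|<1$ is indispensable here: it supplies the factor $\sin^2\theta$ needed for strict obliqueness, and hence the coercivity required by the De Giorgi–Nash–Moser estimate at the boundary. This is the reason the statement is restricted to $\theta\in(0,\pi)$. Once Proposition \ref{holder of spatial der} is in hand, standard parabolic Schauder estimates will bootstrap to the full $C^\infty$ bounds needed for long-time existence.
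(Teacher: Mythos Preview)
Your approach differs from the paper's and carries an overlooked complication. The paper does \emph{not} differentiate the equation at this stage. Instead it freezes $t$ and reads your divergence identity as a quasilinear \emph{elliptic} equation,
\[
\mathrm{div}_\sigma\!\left(\frac{\nabla^0\varphi}{v}\right)=\frac{n}{v}-\frac{n}{1+\dot\varphi}\Bigl[v-\cos\theta\bigl(\cos\beta+\sin\beta\,\nabla^0_{\partial_\beta}\varphi\bigr)\Bigr],
\]
whose right-hand side is merely a bounded function of $x$ (by Propositions~\ref{time derivative varphi est} and~\ref{grad est of varphi}). The $C^{1,\gamma}$ estimate for this divergence-form quasilinear elliptic problem with an oblique boundary condition is quoted as a black box from Ladyzhenskaya--Ural'tseva (interior: \cite[Chapter 3, Theorem 14.1]{LU}; boundary: \cite[Chapter 10, \S2]{LU}), giving $[\nabla^0\varphi]_{x,\gamma}$ uniformly in $t$. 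The time-H\"older bound $[\nabla^0\varphi]_{t,\gamma/2}$ then follows from the interpolation inequality \cite[Chapter II, Lemma 3.1]{LSU}, which needs only the $L^\infty$ bound on $\dot\varphi$ and the spatial H\"older bound just obtained. No parabolic De Giorgi--Nash theory is invoked here.

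Your scheme, differentiating in $e_k$ and applying parabolic De Giorgi--Nash--Moser to $w=\varphi_k$, does not land in the clean form you asserted. Differentiating the identity above in $e_k$ produces a term $\partial_k\dot\varphi=\partial_t w$ on the right with coefficient $nvK/(\dot\varphi+1)^2$; after rearrangement one has
\[
b\,\partial_t w-\partial_i(a^{ij}w_j)=\text{lower order},\qquad b=\frac{nvK}{(\dot\varphi+1)^2},
\]
rather than $\partial_t w-\partial_i(a^{ij}w_j)=\partial_i f^i+g$. The weight $b$ is bounded above and below by positive constants but is only measurable in $(x,t)$, and dividing through by it destroys the divergence structure. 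Weighted parabolic De Giorgi--Nash theory does cover this situation, so your route is salvageable, but you must either invoke such a version explicitly or carry the weight through the Moser iteration by hand. The paper's time-frozen elliptic argument avoids the issue entirely and is shorter.
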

\begin{proof}
	The a priori estimates of $|\n^0 \varphi|$ and $|\p_t \varphi|$ imply the bound for $[\varphi]_{x,\gamma}$ and $[\varphi]_{t,\frac{\gamma}{2}}$, due to Proposition \ref{time derivative varphi est} and \ref{grad est of varphi}. Together with \cite[Chapter II, Lemma 3.1]{LSU}  one can give the bound for $[\n^0 \varphi ]_{t,\frac{\gamma}{2}} $ provided that we have a bound for $[\n^0\varphi]_{x,\gamma}$. Hence it suffices to bound $[\n^0\varphi]_{x,\gamma}$ in the following. In fact, we fix $t$ and rewrite \eqref{scalar flow with capillary} as a quasilinear elliptic PDE with a strictly oblique boundary value condition
	\begin{eqnarray}\label{ell pde of imcf}
	\text{div}_\sigma\left(\frac{\n^0 \varphi}{\sqrt{1+|\n^0 \varphi|^2}}\right)=\frac{n}{v} -\frac{n} {1+\dot{\varphi}}\left[ v -  {\cos\theta} \left( \cos\beta+\sin\beta \n^0_{\p_\beta}\varphi \right)  \right],
\end{eqnarray} and on $\p\SS^n_+$,	$$	\n_{\p_\beta}^0 \varphi=\cos\theta \sqrt{1+|\n^0\varphi|^2}.$$
 The right-hand side of \eqref{ell pde of imcf} is a bounded function in $x$,  due to Proposition \ref{time derivative varphi est} and \ref{grad est of varphi}. Thus for any $\Omega'\subset\subset \SS^n_+$, using \cite[Chapter 3, Theorem 14.1]{LU}, one can prove an interior estimate of form
 \begin{eqnarray*}
 	[\n^0 \varphi]_{\gamma,\Omega'}\leq C \left(\text{dist}_\s(\Omega',\p \SS^n_+), |\n^0 \varphi|, |\dot{\varphi}|\right),
 \end{eqnarray*}for some $\gamma>0$. And the boundary H\"{o}lder estimate of $\n^0\varphi$ can be obtained by adapting the similar procedure as in \cite[Chapter 10, Section 2]{LU}. Hence we prove the estimate for $[\n^0 \varphi]_{x,\gamma}$, which yields the conclusion.
	 
\end{proof}
Next, we estimate the H\"{o}lder norm of the time derivative of $\varphi$.
\begin{prop}\label{holder of time deri}
	If $\varphi$ solves \eqref{scalar flow with capillary}, then
	there exists some $\gamma
	>0 $ such that
	\begin{eqnarray*}
		[\p_t\varphi]_{x,\gamma}+[\p_t \varphi ]_{t,\frac{\gamma}{2}} \leq C,
	\end{eqnarray*}where $[\psi]_{z,\gamma}$ denotes the $\gamma$-H\"{o}lder semi-norm of $\psi$ in $\overline{\SS^n_+}\times[0,T]$ with respect to the $z$-variable and  $C:=C(\|\varphi_0\|_{C^{2+\alpha}(\overline{\SS^n_+})}, n, \gamma)$ .
\end{prop}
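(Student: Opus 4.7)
The plan is to view $w:=\dot\varphi=\p_t\varphi$ as a solution of a linear uniformly parabolic PDE with merely bounded and measurable coefficients, coupled with a strictly oblique linear boundary condition, and then to invoke a Krylov--Safonov-type H\"older estimate extended to the oblique boundary setting. Formally differentiating the interior equation $\dot\varphi=G(\varphi_{ij},\varphi_k)$ in $t$ will produce
\begin{equation*}
\p_t w=G^{ij}\,w_{ij}+G^k\,w_k\qquad\text{on }\SS^n_+\times[0,T],
\end{equation*}
whose principal symbol is, up to bounded scalar prefactors, the matrix $\sigma^{ij}-\varphi^i\varphi^j/v^2$. Differentiating the capillary condition $\n^0_{\p_\beta}\varphi=\cos\theta\sqrt{1+|\n^0\varphi|^2}$ in $t$, and using the identity $\n^0_{\p_\beta}\varphi=\cos\theta\,v$ on $\p\SS^n_+$, will give the linear oblique condition
\begin{equation*}
\sin^2\theta\cdot w_n=\frac{\cos\theta}{v}\sum_{\alpha=1}^{n-1}\varphi_\alpha w_\alpha\qquad\text{on }\p\SS^n_+\times[0,T].
\end{equation*}

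Next I would verify uniform parabolicity. The matrix $\sigma^{ij}-\varphi^i\varphi^j/v^2$ has eigenvalues in $[v^{-2},1]$, and combining the $C^0$--$C^1$ bounds from Propositions \ref{c0 est} and \ref{grad est of varphi} with the two-sided mean-curvature bound from Propositions \ref{upper bdd of F} and \ref{lower bds of F} and with the bound on $1+\dot\varphi$ from Proposition \ref{time derivative varphi est} will keep all scalar prefactors in $G^{ij}$ bounded above and below by positive constants depending only on $\Sigma_0$; in particular $\cos\beta+\sin\beta\,\n^0_{\p_\beta}\varphi\leq v$ by Cauchy--Schwarz, together with $\theta\in(0,\pi)$, controls the sign and lower bound of $1-\tfrac{\cos\theta}{v}(\cos\beta+\sin\beta\,\n^0_{\p_\beta}\varphi)\geq 1-|\cos\theta|>0$. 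Consequently $G^{ij}$ is uniformly positive definite and $G^k$ stays uniformly bounded; since $\n^{0,2}\varphi$ is at this stage only controlled in $L^\infty$, the coefficients are merely bounded and measurable. Analogously, $\sin^2\theta>0$ makes the boundary condition strictly oblique with coefficients bounded by $\|\varphi\|_{C^1}$.

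Having reduced matters to a linear uniformly parabolic equation in non-divergence form with bounded measurable coefficients and a linear strictly oblique boundary condition, I will conclude $[w]_{x,\gamma}+[w]_{t,\gamma/2}\leq C$ from the Krylov--Safonov H\"older estimate in the version available for oblique boundary data, see for instance \cite[Chapter V]{LSU} and the corresponding boundary variants used in \cite{Marquardt2013,Xia}. The main obstacle is precisely the lack of any a priori continuity on $\n^{0,2}\varphi$, which prevents a classical $C^{2,\alpha}$ Schauder argument; the Krylov--Safonov-type estimate is tailored exactly to bypass this. To make the above differentiation rigorous at the regularity currently available, I would work with the time difference quotient $w^h(z,t):=h^{-1}\big(\varphi(z,t+h)-\varphi(z,t)\big)$, derive the analogous linear equation and oblique boundary condition for $w^h$ via the mean value theorem (with uniform-in-$h$ ellipticity and obliqueness coming from the estimates above), apply the measurable-coefficient H\"older estimate to $w^h$, and then pass to the limit $h\to 0^+$.
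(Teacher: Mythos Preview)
Your approach has a genuine gap: the drift coefficients $G^k=\partial G/\partial\varphi_k$ are \emph{not} uniformly bounded with the estimates available at this stage. Writing $G=\dfrac{nv^2 B}{A}-1$ with $A=n-\bigl(\sigma^{ij}-\varphi^i\varphi^j/v^2\bigr)\varphi_{ij}$ and $B=1-\tfrac{\cos\theta}{v}\bigl(\cos\beta+\sin\beta\,\n^0_{\p_\beta}\varphi\bigr)$, one finds that $\partial A/\partial\varphi_k$ contains
\[
\Bigl(\tfrac{\sigma^{ik}\varphi^j+\varphi^i\sigma^{jk}}{v^2}-\tfrac{2\varphi^i\varphi^j\varphi^k}{v^4}\Bigr)\varphi_{ij},
\]
which involves the full Hessian $\varphi_{ij}$. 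At this point only one scalar combination of the Hessian is controlled (namely $e^\varphi vH=A$, via Propositions~\ref{upper bdd of F} and~\ref{lower bds of F}); the individual entries $\varphi_{ij}$ are \emph{not} bounded uniformly in $T$---obtaining such a bound is precisely the goal of the whole section. Your assertion ``since $\n^{0,2}\varphi$ is at this stage only controlled in $L^\infty$'' is therefore incorrect, and without a uniform bound on the drift (even in $L^p$) the Krylov--Safonov machinery does not give a $T$-independent estimate.

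The paper bypasses this obstacle by exploiting the \emph{divergence structure} of the evolution of $\Phi=\dot\varphi$ with respect to the induced metric $g$: it derives
\[
\partial_t\Phi=\text{div}_g\Bigl(\tfrac{n(1+\cos\theta\langle\nu,e\rangle)}{H^2}\,\nabla\Phi\Bigr)+\text{(gradient terms)},
\]
together with the homogeneous Neumann condition $\nabla_\mu\Phi=0$ on $\partial M$. The gradient terms still contain curvature through $\nabla(1+\cos\theta\langle\nu,e\rangle)$, but testing the weak formulation against the tailored function $\eta=\xi^2(1+\cos\theta\langle\nu,e\rangle)(\Phi+1)$ makes this curvature-dependent drift cancel exactly against the corresponding contribution from $\nabla\eta$. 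After estimating the remaining, genuinely lower-order pieces one arrives at a Caccioppoli-type inequality of the form in \cite[Chapter~V, \S1]{LSU}, and the H\"older bound then follows from De~Giorgi--Nash--Moser iteration. The point is that divergence-form theory requires only bounded measurable \emph{leading} coefficients, and the specific test function absorbs exactly the Hessian-dependent terms that block your non-divergence route.
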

\begin{proof}Taking into account \eqref{express H-mean} and \eqref{express nu e}, the first equation in \eqref{scalar flow with capillary} is equivalent to 
 $$\dot{\varphi}= \frac{n(1+\cos\theta \<\nu,e\>)}{uH}-1, $$ where $$u:=\<x,\nu\>=\frac{e^\varphi}{v}.$$ 
		 Combining it with \eqref{evol of F}, \eqref{evo of support fun} and \eqref{evo of nu e}, and through some tedious calculations, we know that the evolution equation of $$ \Phi:=\dot \varphi$$ has a nice divergence structure with respect to the induced metric $g$, which is uniformly controlled (due to the uniform estimates for $\varphi$ and $\n^0\varphi$, see Proposition \ref{grad est of varphi}). Indeed,
		\begin{eqnarray}\label{eq of Phi}
  \p_t  \Phi &=&\text{div}_g \left(\frac{n (1+\cos\theta \<\nu,e\>)\n  \Phi}{H^2} \right) -\frac{2n(1+\cos\theta \<\nu,e\>)}{H^2(\Phi+1)}|\n  \Phi|^2 \\&&\notag
	+ \left\< \T+x-\frac{n\cos\theta}{H}e,\n  \Phi \right\>+ \frac{n}{H^2}\<\n (1+\cos\theta \<\nu,e\>) ,\n  \Phi\>. \end{eqnarray}
Moreover by using \eqref{neuman of bar u} and \eqref{neumann of F},  we know that $\Phi$ satisfies the Neumann boundary value condition, as
\begin{eqnarray*}
	\n_\mu \Phi=\n_\mu \left[\frac{n(1+\cos\theta \<\nu,e\>)}{uH}-1\right]=0.
\end{eqnarray*}In order to adopt the approach as in \cite[Chapter V]{LSU}, we use the weak formulation of the quasi-linear parabolic equation \eqref{eq of Phi}, that is
\begin{eqnarray}\notag
	\int_{t_0}^{t_1} \int_M \eta \p_t\Phi d\mu_tdt&=&	\int_{t_0}^{t_1} \int_M\Bigg[-\frac{n(1+\cos\theta \<\nu,e\>)}{H^2}\<\n \eta,\n \Phi\> \\&&\notag -  \frac{2n(1+\cos\theta \<\nu,e\>)}{H^2(\Phi+1)}|\n  \Phi|^2 \eta +\frac{n}{H^2} \<\n (1+\cos\theta \<\nu,e\>) ,\n  \Phi\>\eta\\&&+\eta \left\<\T+x-\frac{n\cos\theta}{H}e,\n\Phi \right\> \Bigg]d\mu_tdt,\label{weak form}
\end{eqnarray}for the test function  $\eta$ and $0<t_0<t_1<T$. 

Choose a specific test function $\eta$ defined by 
\begin{eqnarray}\label{eta}
	\eta :=\xi^2 (1+\cos\theta \<\nu,e\>) (\Phi+1),
\end{eqnarray}where $\xi\in C_c^\infty(B_\rho\times [0,T))$ is an arbitrary smooth cut-off function with values in $[0,1]$ for some small ball $B_\rho\subset M$. Then the left-hand side of \eqref{weak form} is equal to
\begin{eqnarray}
	\int_{t_0}^{t_1} \int_M \eta \p_t\Phi d\mu_tdt &=&\frac{1}{2}\int_M \xi^2(\Phi+1)^2 (1+\cos\theta \<\nu,e\>) d\mu_t\Big|_{t_0}^{t_1} \notag\\&&-	\int_{t_0}^{t_1} \int_M (\Phi+1)^2 \xi \frac{\p \xi}{\p t} (1+\cos\theta \<\nu,e\>)d\mu_tdt \notag \\&& {- \frac 1 2\int_{t_0}^{t_1} \int_M \xi^2(\Phi+1)^2  \frac{\p}{\p t} \big[  (1+\cos\theta \<\nu,e\>)  d\mu_t \big] dt}. ~~~ \label{term B}
\end{eqnarray}
The sum of the first three terms on the right-hand side in \eqref{weak form} equals
\begin{eqnarray*}
	\int_{t_0}^{t_1} \int_M \left[ -\frac{2n(1+\cos\theta \<\nu,e\>)^2}{H^2}(\Phi+1)\xi \<\n\xi,\n\Phi\>-\frac{3n(1+\cos\theta \<\nu,e\>)^2}{H^2}\xi^2 |\n \Phi|^2 \right]d\mu_tdt.
\end{eqnarray*}
Plugging these back into \eqref{weak form} yields that
\begin{eqnarray}
&&\frac{1}{2}\int_M \xi^2(\Phi+1)^2 (1+\cos\theta \<\nu,e\>) d\mu_t\Big|_{t_0}^{t_1} + 	\int_{t_0}^{t_1} \int_M\frac{3n(1+\cos\theta \<\nu,e\>)^2}{H^2}\xi^2 |\n \Phi|^2 d\mu_t dt \notag\\&=&	\int_{t_0}^{t_1} \int_M \left[ (\Phi+1)    \frac{\p \xi}{\p t}+\xi \left\< \T+x-\frac{n\cos\theta}{H}e,\n\Phi \right\> -\frac{2n(1+\cos\theta \<\nu,e\>)}{H^2}\<\n\xi,\n \Phi\> \right]\cdot \notag \\&& \qquad \xi(\Phi+1)(1+\cos\theta \<\nu,e\>)d\mu_tdt{+ \frac 1 2\int_{t_0}^{t_1} \int_M \xi^2(\Phi+1)^2  \frac{\p}{\p t} \big[  (1+\cos\theta \<\nu,e\>)  d\mu_t \big] dt.}~~~~~~~~~~\label{term B}
\end{eqnarray}
Notice  that Proposition \ref{upper bdd of F} implies that
\begin{eqnarray*}
	\frac{(1+\cos\theta\<\nu,e\>)^2\xi^2|\n \Phi|^2}{H^2}\geq 	\frac{(1+\cos\theta\<\nu,e\>)^2\xi^2|\n \Phi|^2}{\max H^2}.
\end{eqnarray*}For any $\varepsilon>0$, again using Proposition \ref{grad est of varphi}, Proposition \ref{lower bds of F}, Proposition \ref{upper bdd of F} and Young's inequality, we have 
\begin{eqnarray*}
&&\int_{t_0}^{t_1}\int_M \xi^2 \left\< \T+x-\frac{n\cos\theta}{H}e,\n\Phi \right\> (\Phi+1)(1+\cos\theta \<\nu,e\>)d\mu_tdt\\&\leq&  \int_{t_0}^{t_1}\int_M\left[\varepsilon \xi^2|\n\Phi|^2   (1+\cos\theta \<\nu,e\>)^2 +\frac{1}{4\varepsilon}  \xi^2\left| \T+x-\frac{n\cos\theta}{H}e\right|^2 (\Phi+1)^2\right]d\mu_tdt
\\&=&\int_{t_0}^{t_1}\int_M\left[ \varepsilon \xi^2|\n\Phi|^2 (1+\cos\theta \<\nu,e\>)^2+ C_{\varepsilon}\xi^2(\Phi+1)^2\right]d\mu_tdt
\end{eqnarray*}and
\begin{eqnarray*}
	&&-\int_{t_0}^{t_1}\int_M\frac{2n(1+\cos\theta \<\nu,e\>)}{H^2}\<\n\xi,\n \Phi\> \xi(\Phi+1)(1+\cos\theta \<\nu,e\>)d\mu_tdt\\&\leq&  \int_{t_0}^{t_1}\int_M\left[\varepsilon \xi^2|\n\Phi|^2   (1+\cos\theta \<\nu,e\>)^2 +\frac{n^2}{\varepsilon H^4}  |\n \xi|^2(\Phi+1)^2(1+\cos\theta \<\nu,e\>)^2\right]d\mu_tdt
	\\&=&\int_{t_0}^{t_1}\int_M\left[ \varepsilon \xi^2|\n\Phi|^2 (1+\cos\theta \<\nu,e\>)^2+ C_{\varepsilon}|\n\xi|^2(\Phi+1)^2\right]d\mu_tdt,
\end{eqnarray*}where $C_\varepsilon>0$ is a uniform constant.
Next, we handle the last term in  \eqref{term B}. From Proposition \ref{basic evolution eqs} (2) (3), we have
\begin{eqnarray}\label{time deri of area element}
&&  \frac{\p}{\p t} \big[(1+\cos\theta \<\nu,e\>) d\mu_t \big]   \\&=&  [\cos\theta \< -\n f+ h(e_i,\T)e_i,e\> +(1+\cos\theta \<\nu,e\>) (f H+\dive (\T) ] d\mu_t \notag
    \\&=& [  -\cos\theta \<\n f,e\> +(1+\cos\theta \<\nu,e\>) fH  + \dive((1+\cos\theta \<\nu,e\>)\T)  ]d\mu_t,\notag
\end{eqnarray}
where we have used the simple fact 
\begin{eqnarray*}
 \cos\theta  \< h(e_i,\T) e_i,e\>= \< \n(1+\cos\theta \<\nu,e\>), \T\>.
\end{eqnarray*} 
Noticing that  $$f=\frac{n(1+\cos\theta \<\nu,e\>)}{H}-\<x,\nu\>=u \Phi,$$ and 
\begin{eqnarray*}
  (1+\cos\theta \<\nu,e\>) f H= n  (1+\cos\theta \<\nu,e\>)^2 \frac{\Phi}{\Phi+1},
\end{eqnarray*} thus the last term in \eqref{term B} equals to
\begin{eqnarray}
 &&  - \frac{\cos\theta } 2 \int_{t_0}^{t_1} \int_M \xi^2(\Phi+1)^2  \n_{e^T} (u\Phi)d\mu_t dt +\frac n 2 \int_{t_0}^{t_1} \int_M (1+\cos\theta \<\nu,e\>)^2  \xi^2(\Phi+1)\Phi d\mu_t dt \notag \\&& +\frac 1 2 \int_{t_0}^{t_1} \int_M  \xi^2(\Phi+1)^2 \dive(  (1+\cos\theta \<\nu,e\>) \T)d\mu_t dt. \label{term B1}
\end{eqnarray}By integration by parts and Young's inequality,  for any $\varepsilon_1>0$, the third term in \eqref{term B1} satisfies 
\begin{eqnarray*}
&& \left|  \frac 1 2 \int_{t_0}^{t_1} \int_M  \xi^2(\Phi+1)^2 \dive(  (1+\cos\theta \<\nu,e\>) \T)d\mu_t dt \right|\\& = &  \left|  \int_{t_0}^{t_1} \int_M   (1+\cos\theta \<\nu,e\>)  \xi (\Phi+1)^2 \< \n \xi, \T\>d\mu_t dt \right. \\&& \left. 
+\int_{t_0}^{t_1} \int_M   (1+\cos\theta \<\nu,e\>)  \xi^2 (\Phi+1) \< \n \Phi, \T\>d\mu_t dt \right|
\\&\leq & \frac{\varepsilon_1}{2} \int_{t_0}^{t_1}\int_M \xi^2 |\n \Phi|^2 d\mu_t dt +C_{\varepsilon_1}  \int_{t_0}^{t_1}\int_M ( \xi^2+\xi|\n \xi|) (\Phi+1)^2 d\mu_t dt.
\end{eqnarray*}
Similarly, using again integration by part and Young's inequality,  the first term in \eqref{term B1} satisfies 
\begin{eqnarray*}
&& \left|  - \frac{\cos\theta } 2 \int_{t_0}^{t_1} \int_M \xi^2(\Phi+1)^2  \n_{e^T} (u\Phi)d\mu_t dt \right|  \\& =& \left| \int_{t_0}^{t_1}\int_M  {\cos\theta} \left[ \xi ( \n_{e^T} \xi ) u (\Phi+1)^2 \Phi +    \xi^2 u (\Phi+1)\Phi (\n_{e^T} \Phi)  \right] d\mu_t dt \right|
 \\&\leq & \frac{\varepsilon_1}{2} \int_{t_0}^{t_1}\int_M \xi^2 |\n \Phi|^2 d\mu_t dt  +C_{\varepsilon_1}  \int_{t_0}^{t_1}\int_M ( \xi^2+\xi|\n \xi|) (\Phi+1)^2 d\mu_t dt,
\end{eqnarray*}where we have used the fact that $u$ and $\Phi$ are uniformly bounded in the last inequality.  Now by substituting the above two terms back 
into \eqref{term B1}, we conclude that
\begin{eqnarray}\label{term B2}
    |\eqref{term B1}|\leq {\varepsilon_1}\int_{t_0}^{t_1}\int_M \xi^2 |\n \Phi|^2 d\mu_t dt +C  \int_{t_0}^{t_1}\int_M ( \xi^2+|\n \xi|^2) (\Phi+1)^2 d\mu_t dt,
\end{eqnarray}   which gives an estimate for the last term in \eqref{term B}. 
By choosing $\varepsilon, \varepsilon_1>0$ small enough, say $\varepsilon:=\frac{n}{\max H^2}$ and $\varepsilon_1:=\frac{n(1-\cos\theta)^2}{2\max H^2} $ and noticing $2\geq 1+\cos\theta \<\nu,e\>\geq 1-\cos\theta >0$, we conclude that
\begin{eqnarray*}
	&&\frac{1}{2}(1-\cos\theta)\int_M \xi^2(\Phi+1)^2   d\mu_t\Big|_{t_0}^{t_1} +\frac{n(1-\cos\theta  )^2}{2\max H^2} 	\int_{t_0}^{t_1} \int_M\xi^2 |\n \Phi|^2 d\mu_t dt\\&\leq &	C \int_{t_0}^{t_1} \int_M   (\Phi+1) ^2\left[|\n \xi|^2 +\xi^2+\xi \left|\frac{\p\xi}{\p t}\right|(1+\cos\theta \<\nu,e\>)\right] d\mu_tdt.
\end{eqnarray*}
This inequality has the same type of the one in \cite[Chapter V, \textsection 1, Eq.(1.13)]{LSU}, therefore, similar to \cite[Chapter V, \textsection 1]{LSU} (interior estimate) and \cite[Chapter V, \textsection 7, Page 478]{LSU} (boundary estimate) or \cite[Chapter XIII, section 6]{Lie}, the bound for
	$[\Phi]_{x,\gamma}=[\dot \varphi]_{x,\gamma}$ and $[\Phi]_{t,\frac \gamma 2}=[\dot\varphi]_{t,\frac \gamma 2}$  can be obtained. Moreover, all local interior
	and boundary estimates are independent of  $T$. The global estimates follow from the local results and a covering argument, which also does not depend on $T$.	
	\end{proof}
The H\"{o}lder estimate of gradient $\varphi$ implies the H\"{o}lder estimate for the mean curvature.
\begin{prop}\label{holder of mean cur}
	If $\varphi$ solves \eqref{scalar flow with capillary}, then
	there exists some $\gamma
	>0 $ such that
	\begin{eqnarray}\label{holder of mean curv}
		[H]_{x,\gamma}+[H ]_{t,\frac{\gamma}{2}} \leq C,
	\end{eqnarray}where $[\psi]_{z,\gamma}$ denotes the $\gamma$-H\"{o}lder semi-norm of $\psi$ in $\overline{\SS^n_+} \times[0,T]$ with respect to the $z$-variable and  $C:=C(\|\varphi_0\|_{C^{2+\alpha}(\overline{\SS^n_+})}, n, \gamma)$.
\end{prop}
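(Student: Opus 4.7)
The plan is to solve the flow equation algebraically for $H$ and then read off the Hölder regularity from the regularity of the factors, all of which have already been established. From the first equation in \eqref{scalar flow with capillary}, using the identities $u = \langle x,\nu\rangle = e^\varphi/v$ and $\langle\nu,e\rangle = v^{-1}(\cos\beta + \sin\beta\,\n^0_{\p_\beta}\varphi)$, we have
\begin{eqnarray*}
\dot\varphi + 1 = \frac{n(1+\cos\theta\langle\nu,e\rangle)}{u H},
\end{eqnarray*}
so that
\begin{eqnarray*}
H = \frac{n\bigl(1+\cos\theta\langle\nu,e\rangle\bigr)}{u\,(\dot\varphi+1)}.
\end{eqnarray*}
I would then show that each factor on the right-hand side is Hölder continuous in $(x,t)$ with some exponent $\gamma>0$, and that the denominator is uniformly bounded away from zero.

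For the numerator, $\langle\nu,e\rangle$ is a smooth function of $\varphi$ and $\n^0\varphi$, hence by Propositions \ref{grad est of varphi} and \ref{holder of spatial der} it is Hölder in $x$ (with exponent $\gamma$) and, using Proposition \ref{holder of spatial der} for $\n^0\varphi$ in time, Hölder in $t$ with exponent $\gamma/2$. The term $\cos\theta\langle\nu,e\rangle$ inherits the same regularity, and $1+\cos\theta\langle\nu,e\rangle \geq 1-|\cos\theta|>0$ since $\theta\in(0,\pi)$. Similarly, $u = e^\varphi/v$ is a smooth function of $\varphi$ and $\n^0\varphi$, so it is Hölder continuous with the same exponents, and by the $C^0$ bound on $\varphi$ from Proposition \ref{c0 est} together with the gradient estimate, $u$ is bounded above and below by positive constants.

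For the factor $\dot\varphi+1$, Proposition \ref{holder of time deri} gives directly
\begin{eqnarray*}
[\dot\varphi]_{x,\gamma} + [\dot\varphi]_{t,\gamma/2}\leq C.
\end{eqnarray*}
Moreover $\dot\varphi+1$ has a uniform positive lower bound: indeed, Proposition \ref{upper bdd of F} yields $H\leq \max_M H_0$, while $1+\cos\theta\langle\nu,e\rangle\geq 1-|\cos\theta|>0$ and $u$ is bounded above, so
\begin{eqnarray*}
\dot\varphi+1 = \frac{n(1+\cos\theta\langle\nu,e\rangle)}{u H} \geq c>0.
\end{eqnarray*}
(An upper bound, needed only to keep the estimate quantitative, follows from the lower bound on $H$ in Proposition \ref{lower bds of F}.)

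Combining these pieces, $H$ is a smooth function of quantities that are Hölder continuous in $(x,t)$ with exponents $(\gamma,\gamma/2)$, with uniformly controlled arguments and with the denominator bounded away from zero. Standard algebra of Hölder spaces then yields
\begin{eqnarray*}
[H]_{x,\gamma} + [H]_{t,\gamma/2} \leq C,
\end{eqnarray*}
with $C$ depending only on the quantities already controlled by the initial data. There is no genuine obstacle here: the work was already done in Propositions \ref{holder of spatial der} and \ref{holder of time deri}, and the only subtlety is verifying that the denominator $u(\dot\varphi+1)$ is uniformly positive, which uses precisely the two-sided bound on $H$ established in Section \ref{sec2}.
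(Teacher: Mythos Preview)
Your proof is correct and follows essentially the same route as the paper: solve the flow equation algebraically for $H$ and read off the H\"older regularity from Propositions~\ref{holder of spatial der} and~\ref{holder of time deri}, together with the positive lower bound on $\dot\varphi+1$. One cosmetic slip: since $e=-e_{n+1}$, the identity should read $\langle\nu,e\rangle=-v^{-1}(\cos\beta+\sin\beta\,\nabla^0_{\partial_\beta}\varphi)$, but this does not affect the argument.
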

\begin{proof}
	 From \eqref{scalar flow with capillary}, we know
	 \begin{eqnarray*}
	 	e^\varphi H= \frac{n}{1+\dot\varphi} \left[\sqrt{1+|\n^0\varphi|^2}-  {\cos\theta} \left( \cos\beta+\sin\beta \n^0_{\p_\beta}\varphi \right)  \right].
	 \end{eqnarray*}
	 Hence the conclusion \eqref{holder of mean curv} follows from the H\"{o}lder estimates for $|\n^0 \varphi|, \dot \varphi$ and $\varphi$ in Proposition \ref{holder of spatial der} and Proposition \ref{holder of time deri}, while the H\"{o}lder estimate for $\varphi$ follows trivially from Proposition \ref{time derivative varphi est} and Proposition \ref{grad est of varphi}.
\end{proof}
With the preparation above, we obtain the full second-order and high-order derivative estimates for $\varphi$.
\begin{prop}
	If $\varphi$ solves \eqref{scalar flow with capillary}, then for every $t_0\in(0,T)$,
	there exists some $\gamma
	>0 $ such that
	\begin{eqnarray}\label{2nd order est}
	\|\varphi\|_{C^{2+\gamma,1+\frac{\gamma}{2}}(\overline{\SS^n_+}\times[0,T])}\leq C(\|\varphi_0\|_{C^{2+\alpha}(\overline{\SS^n_+})}, n, \gamma   ),
	\end{eqnarray}and
	\begin{eqnarray}\label{kth order est}
	\|\varphi\|_{C^{2l+\gamma,l+\frac{\gamma}{2}}(\overline{\SS^n_+}\times[t_0,T])}\leq C(\|\varphi(\cdot,t_0)\|_{C^{2l+\alpha}(\overline{\SS^n_+})}, n, \gamma   ),
\end{eqnarray}for any $2\leq l\in \NN$.
\end{prop}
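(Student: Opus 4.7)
The plan is to combine the H\"older estimates for $\varphi$, $\n^0\varphi$, $\dot\varphi$ and $H$ already established with parabolic Schauder theory for oblique boundary value problems, and then bootstrap for higher regularity.

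\textbf{Step 1: $C^{2+\gamma,1+\gamma/2}$ estimate on the whole time interval.} By Propositions \ref{holder of spatial der}, \ref{holder of time deri} and \ref{holder of mean cur}, the quantities $\varphi$, $\n^0\varphi$, $\dot\varphi$ and $H$ are uniformly H\"older continuous on $\overline{\SS^n_+}\times[0,T]$. Fix $t$ and view \eqref{scalar flow with capillary} in the divergence form \eqref{ell pde of imcf}: the right-hand side is now a known H\"older continuous function of $(x,t)$ by the preceding propositions. Equivalently, linearizing the fully nonlinear equation $\dot \varphi = G(\varphi_{ij},\varphi_k)$ around the solution $\varphi$ gives a linear parabolic equation whose coefficients $\partial G/\partial \varphi_{ij}$ and $\partial G/\partial \varphi_k$ depend only on $(\varphi,\n^0\varphi,\dot\varphi)$ and are therefore uniformly H\"older continuous. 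The nonlinear boundary condition in \eqref{scalar flow with capillary} is oblique with strictly positive obliqueness (the normal derivative coefficient of its linearization equals $\sin^2\theta/\sqrt{1+|\n^0\varphi|^2}>0$, since $|\cos\theta|<1$), and after linearization becomes a linear oblique condition with H\"older continuous coefficients. Applying the linear parabolic Schauder theory for oblique boundary value problems (cf.\ \cite[Ch.\ IV, \textsection 5 and Ch.\ V]{LSU}) yields
\begin{equation*}
\|\varphi\|_{C^{2+\gamma,1+\gamma/2}(\overline{\SS^n_+}\times[0,T])} \leq C\big(\|\varphi_0\|_{C^{2+\alpha}(\overline{\SS^n_+})},n,\gamma\big),
\end{equation*}
which is \eqref{2nd order est}.

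\textbf{Step 2: Higher-order regularity on $[t_0,T]$.} Fix $t_0\in(0,T)$ and proceed by induction on $l$. Assume $\varphi \in C^{2(l-1)+\gamma,(l-1)+\gamma/2}(\overline{\SS^n_+}\times[t_0/2,T])$. Since $G$ and the boundary operator $\n^0_{\p_\beta}\varphi - \cos\theta\sqrt{1+|\n^0\varphi|^2}$ are smooth functions of their arguments, differentiating \eqref{scalar flow with capillary} tangentially along $\p\SS^n_+$ and in $t$ produces a linear parabolic equation for each derivative $\partial^{\alpha}\varphi$ of order $|\alpha|\leq 2(l-1)+1$, with coefficients of the required H\"older regularity by the induction hypothesis, together with a linear oblique boundary condition of the same regularity. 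A further application of parabolic Schauder theory upgrades the regularity to $C^{2l+\gamma,l+\gamma/2}(\overline{\SS^n_+}\times[t_0,T])$, establishing \eqref{kth order est}. The restriction to $t\geq t_0>0$ is necessary because the initial datum is only assumed $C^{2+\alpha}$, so higher-order compatibility conditions at $t=0$ may fail.

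\textbf{Main obstacle.} The sole technical point is verifying that, in spite of the nonlinearity of the capillary boundary condition, the linearized problem is a strictly oblique linear parabolic problem with H\"older coefficients. Once the preliminary H\"older estimates for $\n^0\varphi$ and $\dot \varphi$ are in hand, the obliqueness constant is bounded below by a positive constant depending only on $\theta$, so the classical Schauder theory (as in \cite{LSU} and Lieberman) applies without essential modification, and the bootstrap in Step 2 is routine.
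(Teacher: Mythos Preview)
Your proposal is correct and follows essentially the same route as the paper: freeze the equation as a linear uniformly parabolic problem with H\"older coefficients (using the already-obtained H\"older bounds on $\varphi,\n^0\varphi,\dot\varphi,H$) together with a strictly oblique boundary condition, apply linear Schauder theory, and then bootstrap for higher regularity. The paper makes the first step slightly more concrete by explicitly rewriting the equation as $\partial_t\varphi=\frac{n}{H^2}\bigl[1-\frac{\cos\theta}{v}(\cos\beta+\sin\beta\,\n^0_{\partial_\beta}\varphi)\bigr]\Delta_g\varphi+\text{l.o.t.}$, which is exactly the computation underlying your observation that $\partial G/\partial\varphi_{ij}$ depends only on $(\varphi,\n^0\varphi,\dot\varphi)$ via the identity $n-(\sigma^{ij}-\varphi^i\varphi^j/v^2)\varphi_{ij}=e^\varphi vH$; your slightly looser ``linearization'' phrasing describes the same mechanism.
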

\begin{proof}
To get the Schauder estimates \eqref{2nd order est}, we rewrite the evolution equation for $\varphi$ with respect to the induced metric $g$, which is uniformly controlled (due to the uniform estimate for $\varphi$ and $\n^0\varphi$, see Proposition \ref{grad est of varphi}). Noticing that $v:=\sqrt{1+|\n^0\varphi|^2}$ and
\begin{eqnarray*}
e^\varphi v	H=  n-\left(\s^{ij}-\frac{\varphi^i\varphi^j}{v^2} \right)\varphi_{ij}=n-e^{2\varphi} \Delta_g \varphi,
\end{eqnarray*} we have
\begin{eqnarray*}
	\p_t\varphi &=& \frac{nv}{e^\varphi H}\left[1- \frac{\cos\theta}{v}\left( \cos\beta+\sin\beta \n^0_{\p_\beta}\varphi \right)  \right]-1
	\\&=&-\frac{n}{e^{2\varphi}H^2}(n-e^{2\varphi}\Delta_g\varphi) \left[1- \frac{\cos\theta}{v}\left( \cos\beta+\sin\beta \n^0_{\p_\beta}\varphi \right)  \right]\\&&+\frac{2nv}{e^\varphi H}\left[1- \frac{\cos\theta}{v}\left( \cos\beta+\sin\beta \n^0_{\p_\beta}\varphi \right)  \right]-1.
\end{eqnarray*} It follows 
\begin{eqnarray}
	\p_t\varphi&=&\label{eq of varphi}
	\frac{n}{H^2}\left[1- \frac{\cos\theta}{v}\left( \cos\beta+\sin\beta \n^0_{\p_\beta}\varphi \right)  \right] \Delta_g \varphi \\&& \notag +\left(\frac{2nv}{e^\varphi H}-\frac{n^2}{e^{2\varphi}H^2}\right)\left[1- \frac{\cos\theta}{v}\left( \cos\beta+\sin\beta \n^0_{\p_\beta}\varphi \right)  \right]-1,
\end{eqnarray} 
	which is a linear, uniformly parabolic equation with H\"{o}lder coefficients, due to Proposition \ref{holder of mean cur} and Proposition \ref{holder of spatial der}. Moreover, the Neumann boundary value condition in \eqref{scalar flow with capillary} satisfies the strictly oblique property (due to $|\cos\theta|<1$) and its coefficient has uniform H\"{o}lder bounds (due to again Proposition \ref{holder of spatial der}). Hence \eqref{2nd order est} follows from the linear parabolic PDE theory with a strictly oblique boundary value problem (cf. \cite[Chapter V, Theorem 4.9 and Theorem 4.23]{Lie}, see also \cite[Chapter IV, Theorem 5.3]{LSU}).

Next, we show \eqref{kth order est}. By differentiating both sides of \eqref{eq of varphi} with respect to $t$ and
	$\p_{j}$, $1\leq j\leq n$, respectively, one can easily
	get evolution equations of $\dot \varphi$ and $\n^0_j\varphi $ respectively, which,
	using the estimate \eqref{2nd order est}, can be treated as uniformly
	parabolic PDEs on the time interval $[t_0,T]$. At the initial time
	$t_0$, all compatibility conditions are satisfied and the initial
	function $\varphi(\cdot, t_0)$ is smooth, which implies the
	$C^{3+\gamma,(3+\gamma)/2}$-estimate for $\n^0_j\varphi$ and the
	$C^{2+\gamma,(2+\gamma)/2}$-estimate for $\dot \varphi$. So, we have the
	$C^{4+\gamma,(4+\gamma)/2}$-estimate for $\varphi$. From \cite[Chapter 4,
	Theorem 4.3, Exercise 4.5]{Lie} and the above argument, it is not
	difficult to know that the constants are independent of the time $T$. Higher
	regularity of \eqref{kth order est} can be proven by induction over $2\leq l\in \NN$. Hence we complete the proof.
\end{proof}

With the estimates given above, the long-time existence of inverse mean curvature type flow \eqref{scalar flow with capillary} or \eqref{flow00}   follows directly. That is.

\begin{cor} \label{longtime ex imcf}
If $\S_0$ is star-shaped and strictly mean convex capillary hypersurface with a contact angle $\theta \in (0, \pi)$, then the solution of flow \eqref{flow00}  exists for all time with a uniform $C^\infty$-estimates. 
\end{cor}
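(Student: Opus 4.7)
The plan is a standard parabolic continuation argument. By local parabolic theory applied to the radial graph equation \eqref{scalar flow with capillary} with its strictly oblique boundary condition, we obtain a smooth solution on a maximal interval $[0,T^\ast)$ inside the class of star-shaped, strictly mean convex capillary hypersurfaces. To conclude $T^\ast=+\infty$, it suffices to establish uniform $C^{k}$ estimates on $[0,T]$ for every $T<T^\ast$, independent of $T$; one can then restart the flow at $T^\ast$ via the same short-time existence theorem, contradicting the maximality of $T^\ast$.

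The uniform estimates are precisely what has been assembled in Sections \ref{sec2} and \ref{sec4}, and my plan is simply to chain them. Proposition \ref{c0 est} confines $\S_t$ between the two fixed spherical caps $\widehat{\C_{\theta,r_1}}$ and $\widehat{\C_{\theta,r_2}}$, giving the $C^0$ bound on the radial function $\rho$. The preservation of star-shapedness (Proposition \ref{c1-est to flow}), proved via the anisotropic support function $\bar u=\langle x,\nu\rangle/(1+\cos\theta\langle\nu,e\rangle)$ with homogeneous Neumann data, yields $\langle x,\nu\rangle\geq c_0>0$, equivalent to the $C^1$ bound on $\varphi$ from Proposition \ref{grad est of varphi}. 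The two-sided curvature bound $C^{-1}\leq H\leq C$, from Propositions \ref{upper bdd of F} and \ref{lower bds of F}, with the lower bound crucially obtained by applying the maximum principle to $P=H\bar u$ rather than $H$ itself, ensures that \eqref{flow00} stays uniformly parabolic and that strict mean convexity is preserved.

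Given these bounds, I invoke the H\"older package in Propositions \ref{holder of spatial der}--\ref{holder of mean cur}: a spatial H\"older bound for $\nabla^0\varphi$ via the quasilinear elliptic formulation \eqref{ell pde of imcf} with oblique boundary data; a H\"older bound for $\dot\varphi$ via the divergence-form evolution \eqref{eq of Phi} together with the energy iteration of \cite{LSU} using the weighted cut-off $\eta=\xi^2(1+\cos\theta\langle\nu,e\rangle)(\Phi+1)$; and consequently a H\"older bound on $H$. At this point \eqref{eq of varphi} becomes a \emph{linear}, uniformly parabolic equation for $\varphi$ with H\"older coefficients subject to a strictly oblique linear boundary condition (since $|\cos\theta|<1$), so the linear Schauder theory yields \eqref{2nd order est}; differentiating in space and time and bootstrapping then gives \eqref{kth order est} to every order, with constants independent of $T$.

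The main conceptual obstacle, already surmounted in the preceding subsection, is the $C^2$ estimate in the presence of the capillary boundary: a direct tensorial maximum-principle argument on the full second fundamental form would produce an uncontrolled boundary term along $\p\S_t$ (this is where the restriction $\theta\leq\pi/2$ appeared in earlier works). The Marquardt--Xia scheme followed above bypasses a pointwise $|h|$ bound altogether, extracting H\"older regularity of $H$ from the divergence structure of the normal speed and then recovering $C^{2+\gamma,1+\gamma/2}$ regularity of $\varphi$ from linear Schauder theory. With that step in place, the present corollary follows from the continuation argument described in the first paragraph.
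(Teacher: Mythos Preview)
Your proposal is correct and follows exactly the approach of the paper: the corollary is stated immediately after the chain of a priori estimates (Propositions~\ref{c0 est}--\ref{lower bds of F} and the Schauder package \eqref{2nd order est}--\eqref{kth order est}) with the remark that long-time existence ``follows directly,'' and you have simply spelled out the standard continuation argument that the paper leaves implicit.
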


\
\

We are now ready to prove Theorem \ref{k-convex conv} by using the above uniform a priori estimates.

\begin{proof}[\textit{Proof of Theorem \ref{k-convex conv}}]
Following the parabolic theory with a strictly oblique boundary value condition (cf. \cite{Dong,Lie,Ura}), we know that the solution of equation \eqref{flow00} exists for all time with a uniform $C^\infty$-estimate. And the convergence result of Theorem \ref{k-convex conv} can be proved in a similar way as \cite[Proposition 4.12]{WWX2022}, by adopting the monotonicity property of quermassintegrals along flow \eqref{flow00}.
\end{proof}

\
Finally, by applying the convergence result of flow \eqref{flow00} and the monotonicity of $\V_{2,\theta}$, i.e., Theorem \ref{k-convex conv} and \cite[Proposition 3.1]{WWX2022}, one can complete the proof of Theorem \ref{thm 2} for the strictly mean convex case. When $\S$ is only mean convex, we use the approximation argument, as described in \cite{GL09} to prove Theorem \ref{thm 2}. The equality characterization in Theorem \ref{thm 2} can be established using the same approach as in \cite{SWX, WWX2022}. For the sake of conciseness, we will omit it here and leave it to the interested reader.
  
    \
    
  \noindent\textit{Acknowledgment:} 
  The authors would like to thank the referee for his/her careful reading and pointing out a missing term in the previous version.
  
L.W. was partially supported by: MIUR excellence project: Department of Mathematics, University of Rome "Tor Vergata" CUP E83C18000100006 and NSFC (Grant No. 12201003). Part of this work was done while L.W.  was visiting the Department of Mathematics at the University of Rome "Tor Vergata".  He would like to express his sincere gratitude to Prof. ssa G. Tarantello for her constant encouragement and support, and to the department for its hospitality. C.X. was partially supported by NSFC (Grant No. 12271449, 12126102).

\end{document}